\theoremstyle{theorem}
\newtheorem{theorem}{Theorem}
\newtheorem{proposition}{Proposition}
\newtheorem{lemma}{Lemma}
\newtheorem{corollary}{Corollary}
\theoremstyle{definition}
\newtheorem{definition}{Definition}
\newtheorem{example}{Example}
\newtheorem{remark}{Remark}
\newcommand{\eps}{{\varepsilon}}
\newcommand{\sys}{{\rm sys}}
\newcommand{\Sys}{\sigma}
\newcommand{\SSys}{{\mathfrak S}}
\newcommand{\vol}{{\rm vol}}
\newcommand{\R}{{\mathbb R}}
\newcommand{\N}{{\mathbb N}}
\newcommand{\TT}{{\mathbb T}}
\newcommand{\Z}{{\mathbb Z}}
\newcommand{\C}{{\mathbb C}}
\newcommand{\G}{{\mathcal G}}
\newcommand{\U}{{\mathcal U}}
\newcommand{\area}{{\rm area}}
\newcommand{\A}{\mathcal A}
\numberwithin{equation}{section}
\numberwithin{remark}{section}
\numberwithin{theorem}{section}
\numberwithin{corollary}{section}
\numberwithin{definition}{section}
\numberwithin{lemma}{section}
\numberwithin{claim}{section}
\numberwithin{proposition}{section}
\title{Systolic geometry and simplicial complexity for groups}
\author{I.~Babenko, F.~Balacheff and G.~Bulteau}
\thanks{This work was partially supported by the grant RFSF 10-01-00257-a and the program ANR Finsler-12-BS01-0009-02}
\address{I. Babenko, Institut Montpelli\'erain Alexander Grothendieck,  Universit\'e Montpellier 2, France}
\email{ivan.babenko@umontpellier.fr}
\address{F. Balacheff, Laboratoire Paul Painlev\'e, Universit\'e Lille 1, France.}
\email{florent.balacheff@math.univ-lille1.fr}
\address{G. Bulteau, Institut Montpelli\'erain Alexander Grothendieck, Universit\'e Montpellier 2, France}
\email{guillaume.bulteau@ac-montpellier.fr}
\keywords{Systolic geometry, complexity of
groups.}
\subjclass{53C23}
\begin{document}%
\clearpage
\begin{abstract}
Twenty years ago Gromov asked about how large is the set of isomorphism classes of groups whose systolic area is  bounded from above. This article introduces a new combinatorial invariant for finitely presentable groups called {\it simplicial complexity} that allows to obtain a quite satisfactory answer to his question. Using this new complexity, we also derive new results on systolic area for groups that specify its topological behaviour.
\end{abstract}

\maketitle


\section{Introduction}

We focus on groups which can be presented as the fundamental group of a finite $2$-dimensional
simplicial complex. Thus, throughout this article, by {\it group} we mean {\it finitely presentable group},
and by {\it simplicial complex} we mean {\it finite simplicial complex}.\\

First recall the definition of systolic area for $2$-dimensional
simplicial complexes and  groups. Let $X$ be a simplicial complex
of dimension $2$ and suppose that its fundamental group is not
trivial. Given a piecewise smooth Riemannian metric $h$ on $X$ the
systole denoted by $\sys(X,h)$ is defined as the shortest length
of a non-contractible closed curve in $X$. We call {\it systolic
area} the number
 $$
 \Sys(X):=\inf_h \; {\area(X,h) \over \sys(X,h)^2}
 $$
where the infimum is taken over all piecewise smooth Riemannian metrics on $X$.
Following \cite[p.337]{Gro96} the {\it systolic area} of a group $G$ is the number
$$
\Sys(G):=\inf_X \; \Sys(X)
$$
where the infimum is taken over all $2$-dimensional simplicial complexes $X$ with fundamental group $G$.

\medskip

It is straightforward that systolic area for free groups is zero. For a non-free group $G$, we have the following universal lower bound:
$$
\Sys(G)\ge \dfrac{\pi}{16}.
$$
This bound was proved by Rudyak \& Sabourau in \cite{RS08} where the authors also posed
the following two fundamental questions: given a non-free group $G$, is it true that
\begin{enumerate}
\item $\Sys(G)\geq{2/\pi}$ ?

\medskip

\item $\Sys(G\ast\Z)=\Sys(G)$ ?
\end{enumerate}

\medskip

The first question is motivated by the fact that systolic area for surfaces is known to be minimal
for $\R P^2$ whose corresponding fundamental group is  $\Z_2$ and that its value is precisely $2/\pi$
(see \cite[5.2.B]{Gro83} and \cite{Pu52}).\\

 The second question is related to finitude problems for systolic area.
 It is easy to check that systolic area is subadditive for free products, and that in particular
\begin{equation}\label{eq:facteurs.libres}
\Sys(G\ast F_n)\leq \Sys(G)
\end{equation}
where $F_n$ denotes the free group of rank $n$. In \cite[p.337]{Gro96}
Gromov raised the following question: given a positive number $T$ how large is the set
of isomorphism classes of groups with systolic area at most $T$ ? Because of inequality
(\ref{eq:facteurs.libres}) we may hope a finitude result only if we consider groups without
$\Z$ as a free factor (or at least with an uniformly bounded number of such factors). In \cite{RS08}
the authors proved such a finitude result and give a (non-sharp) upper bound for the cardinality.
We give the exact statement of their result in the next paragraph, but first observe that
an affirmative answer to the second question above would thus imply that the set of values
of the systolic area function lying in a compact interval is always finite.\\

Let us now make precise the notion of groups without $\Z$ as a free factor.
According to  \cite[\S 35]{Kur60}---see also  \cite{Mas67}
for a topological version--- for any group $G$ there exist an unique integer $n$
and an unique subgroup $H$ (up to conjugation) such that $G$ decomposes into a free product
\begin{equation} \label{eq:facteurs.libres.groupe}
G = H \ast F_n
\end{equation}
where $H$ can not be decomposed in its turn like in (\ref{eq:facteurs.libres.groupe}) with a positive $n$.
 We call this number $n$ the {\it free index} of $G$. A group without $\Z$ as a free factor is thus a group of free index zero.\\

Denote by $\G_\Sys(T)$ the set of isomorphism classes of groups $G$ with free index zero such that $\Sys(G)\leq T$.
According to \cite{RS08} it is a finite set whose cardinality satisfies the upper bound
\begin{equation}\label{eq:finitudeRS}
\vert \G_\Sys(T)\vert \leq A^{T^3}
\end{equation}
for some explicit constant  $A>1$ and the lower bound
\begin{equation}\label{eq:infRS}
2^T \leq \vert \G_\Sys(T)\vert
\end{equation}
for $T$ large enough.
In this article we propose an alternative proof of the finitude of $\G_\Sys(T)$ which leads
to an improvement of the upper bound (\ref{eq:finitudeRS}). We also give a lower bound
for the cardinality of the subset $\A_\Sys(T) \subset \G_\Sys(T)$ consisting of isomorphism classes of {\it finite abelian} groups.

\begin{theorem}\label{th:finitude}
There exist constants $B$, $B'$ and $B''$ such that the following inequality holds for every $T \geq 2$
$$
 \left[2^{{\pi \over 1+2\sqrt{3}}T}\right] \leq |\mathcal{A}_{\Sys}(T)| \leq |\mathcal{G}_{\Sys}(T)| \leq B^{T^{1 + {B' \over \sqrt{\ln (B'' T)}}}}.
$$
Here $[x]$ denotes the integral part of a number $x$.
\end{theorem}
We thus derive the following answer to Gromov's original question.
\begin{corollary}
We have 
$$
\vert \G_\Sys(T) \vert \leq B^{T^{1 + \varepsilon}}
$$
for any positive $\eps$ provided $T$ is large enough. In particular,
$$
\mathop{\lim}\limits_{T \rightarrow
\infty}{\ln\ln|\mathcal{G}_{\Sys}(T)| \over \ln T} = 1.
$$
\end{corollary}
Observe that our lower bound in Theorem \ref{th:finitude} does not improve inequality (\ref{eq:infRS}), as ${\pi /(1+2\sqrt{3})}\simeq 0.7$.
But it shows that an exponential asymptotic growth is already realized on the class
of finite abelian groups.

The main tool to improve the upper bound of Rudyak \& Sabourau is a
new combinatorial invariant for groups
introduced in section \ref{sec:simplicialcomplexity}. Namely, given a group $G$, this new invariant called {\it simplicial
complexity} and denoted by $\kappa(G)$ is the minimal number of
$2$-simplices of a $2$-dimensional simplicial complex with
fundamental group $G$ (see Definition
\ref{def:simplicialcomplexity}). This invariant might be thought as a discrete version of area for groups.

For groups with free index zero the two invariants $\Sys(G)$ and $\kappa(G)$ are closely related.
The central result of this article is the following comparison theorem.

\begin{theorem}\label{th:syscomplexity}
Let $G$ be a group with free index zero. Then
$$
2\pi \sigma (G) \leq \kappa (G) \leq 625 (50^2 \cdot \sigma(G))^{1 + {2(1 + \ln 5)  \over \sqrt{\ln(50^2 \cdot  \sigma(G))}}}.
$$
\end{theorem}

 This theorem is the link between the different results in this paper and will be proven in section \ref{sec:complexityvssystolicarea}.
It shows that for large values the systolic area $\sigma$ is a quasi-linear function of the simplicial complexity $\kappa$. More precisely, for any positive $\varepsilon > 0$
$$
2\pi \Sys (G) \leq \kappa (G) \leq (\Sys (G))^{1 + \varepsilon}
$$
provided $\kappa(G)$ is large enough. Observe that there is no hope for a linear upper bound as for surface groups it is known that systolic area is strictly sublinear (see \cite{BS94,BPS12}) while the simplicial complexity is linear in terms of the genus (see Example \ref{ex:surfaces} in section \ref{sec:def}).\\

Using Theorem \ref{th:syscomplexity} the problem of estimating $|\mathcal{G}_{\Sys}(T)|$ transforms into a purely combinatorial problem. For a positive integer
$T$ we denote  by $\mathcal{G}_{\kappa}(T)$ the set of isomorphism classes of groups $G$ with free index zero such that $\kappa(G) \leq T$. We also denote by $\mathcal{A}_{\kappa}(T)$ the subset corresponding to finite abelian groups.

\vskip7pt
\begin{theorem}\label{th:borne.sup}
For any $T\geq 2$
$$
\left[2^{T-3 \over 14}\right] \leq |\mathcal{A}_{\kappa}(T)| \leq |\mathcal{G}_{\kappa}(T)| \leq 2^{6T\log_2T}.
$$
\end{theorem}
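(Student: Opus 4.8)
The plan is to establish the three inequalities separately, and I expect the upper bound $|\mathcal{G}_{\kappa}(T)| \leq 2^{6T\log_2 T}$ to be the main point, the other two being comparatively soft.

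For the upper bound, I would argue that if $\kappa(G) \leq T$, then by definition $G$ is the fundamental group of a $2$-dimensional simplicial complex $X$ with at most $T$ triangles. A bare count shows such a complex has at most $3T$ edges and, after discarding isolated vertices (which do not change $\pi_1$), at most $3T$ vertices as well. Hence $X$ has at most $3T$ vertices, and it is determined up to simplicial isomorphism by which subsets of size $\leq 3$ of its vertex set are simplices; the number of possible such complexes on a fixed labelled vertex set of size $n \leq 3T$ is at most $2^{\binom{n}{1}+\binom{n}{2}+\binom{n}{3}} \leq 2^{(3T)^3}$ crudely, but to reach the stated bound one should instead count only the data of at most $T$ triangles among $\binom{3T}{3}$ possible ones, giving at most $\binom{\binom{3T}{3}}{T} \leq \left(\binom{3T}{3}\right)^T \leq (3T)^{3T} = 2^{3T\log_2(3T)}$ complexes, hence at most $2^{3T\log_2(3T)}$ isomorphism classes of groups. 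It then remains to check the arithmetic inequality $3T\log_2(3T) \leq 6T\log_2 T$ for $T \geq 2$, which holds since $\log_2(3T) = \log_2 3 + \log_2 T \leq 2\log_2 T$ once $\log_2 T \geq \log_2 3$, i.e. $T \geq 3$, with the case $T = 2$ handled directly. The only subtlety here is being careful that restricting to zero free index only shrinks the set, so the count is a valid upper bound.

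For the lower bound on $|\mathcal{A}_{\kappa}(T)|$, I would exhibit many finite abelian groups of controlled simplicial complexity. The natural candidates are the cyclic groups $\Z_m$: one builds a standard $2$-complex realizing the presentation $\langle a \mid a^m \rangle$ using on the order of $m$ triangles (a triangulated disk whose boundary wraps $m$ times around a circle), so $\kappa(\Z_m) \leq 14m + 3$ or some such linear bound — the constant $14$ in the statement is exactly what one gets from a careful triangulation. Thus all $\Z_m$ with $14m+3 \leq T$, i.e. $m \leq (T-3)/14$, lie in $\mathcal{A}_{\kappa}(T)$, and these are pairwise non-isomorphic, giving at least $\lfloor (T-3)/14 \rfloor$ such groups. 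Wait — the statement asks for $2^{(T-3)/14}$, not $(T-3)/14$, so instead of cyclic groups one should use groups like $(\Z_2)^k$ or products $\Z_{m_1} \times \cdots \times \Z_{m_r}$: taking direct sums corresponds (for complexity) to wedging or gluing complexes, and $\kappa$ behaves subadditively enough that a product of $k$ copies of $\Z_2$ has complexity at most (roughly) $14k$ — actually one wants on the order of $\log$ of the number of groups realized, so one considers all subsets $S \subseteq \{p_1,\dots,p_k\}$ of the first $k$ primes (or all $(\Z_2)^j$), realizing $2^{k}$ distinct abelian groups with complexity $\leq 14k+3 \leq T$. This yields $|\mathcal{A}_{\kappa}(T)| \geq 2^{\lfloor(T-3)/14\rfloor}$.

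The hardest part will be the explicit triangulation estimates: producing an economical $2$-complex for a presentation with a prescribed, small number of triangles and rigorously bounding that number by $14m+3$ (and controlling how the count grows under taking direct products of abelian groups). This requires a concrete, somewhat fiddly cell-by-cell construction — triangulating a relator disk of perimeter $m$, wedging copies together, and keeping the triangle count linear — rather than any conceptual difficulty. Once those building blocks are in place, both bounds follow by elementary counting, and the quoted constants $6$ and $14$ are simply the output of optimizing these constructions.
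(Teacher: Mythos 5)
Your upper-bound count is essentially workable (and more direct than the paper's, which encodes minimal complexes by $3$-colored graphs extracted from the barycentric subdivision), but your lower bound has a genuine gap, and it is exactly where the paper's main construction sits. To get $\left[2^{(T-3)/14}\right]$ finite abelian groups of complexity at most $T$ you need exponentially many isomorphism classes inside a complexity budget that is \emph{linear} in $T$, and neither of your candidate families delivers this. For $(\Z_2)^k$ the complexity is not ``roughly $14k$'': since $H_2((\Z_2)^k,\Z_2)$ has dimension $k(k+1)/2$, one has $\kappa((\Z_2)^k)\ge k(k+1)/2$, which is quadratic in $k$ (this is the remark following Theorem \ref{teo:cyclique}); besides, the groups $(\Z_2)^j$ give only $k+1$ isomorphism classes, not $2^k$. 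For products of distinct primes you are just producing cyclic groups $\Z_m$ with $m$ squarefree, and the only bound your construction provides is $\kappa(\Z_m)\lesssim 14m$: a relator disk whose boundary wraps $m$ times around a circle forces on the order of $m$ triangles (the paper notes such a one-cell complex needs at least $3m$ triangles), so within budget $T$ you only reach $m\lesssim T/14$, i.e.\ linearly many groups, or, via the first $k$ primes, about $2^{\ln T/\ln\ln T}$ of them --- nowhere near $2^{(T-3)/14}$. The missing idea is the logarithmic estimate $\kappa(\Z_m)\le 14\log_2 m+3$ (Lemma \ref{lemme:1}), obtained from the Moebius telescope: a stack of $\approx\log_2 m$ triangulated Moebius strips, each doubling the core class, with a single disk attached along a curve encoding the dyadic expansion of $m$. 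The constant $14$ in the statement multiplies $\log_2 m$, not $m$; with the logarithmic bound, all $\Z_m$ with $m\le 2^{(T-3)/14}$ lie in $\mathcal{A}_{\kappa}(T)$ and the lower bound follows immediately, without any need for direct products.

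On the upper bound, the step ``a bare count shows such a complex has at most $3T$ edges'' is false for an arbitrary complex realizing $\kappa(G)\le T$: nothing prevents edges (indeed whole graphs) disjoint from every triangle, and removing them does change $\pi_1$. The zero free index hypothesis enters exactly here --- not, as you suggest, merely because it ``shrinks the set'' --- since for such a group a minimal complex satisfies the paper's properties $(M_1)$ and $(M_2)$ (every edge on at least two $2$-simplices, every vertex on at least four): an edge on no triangle is either a cut edge, which can be collapsed, or splits off a free factor $\Z$, which is excluded. Granting that, $s_1(X)\le 3T$ and $s_0(X)\le 3T$ (the paper gets $3T/2$ and $3T/4$), the complex is determined by its set of triangles, and your count $\sum_{k\le T}\binom{\binom{3T}{3}}{k}\le (T+1)\left(\binom{3T}{3}\right)^{T}$ does yield $2^{6T\log_2 T}$ for all $T\ge 2$; but keep the binomial $\binom{3T}{3}\le (3T)^3/6$ rather than the cruder $(3T)^3$, since with $(3T)^{3T}$ the inequality actually fails at $T=2$ and $T=3$ once the sum over $k$ is accounted for.
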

\vskip7pt

The lower bound is proved by estimating the simplicial complexity of $\Z_m$. For the upper bound we code simplicial complexes with a minimal number of $2$-simplices by some special colored graphs and estimate their number, see section \ref{sec:finitude}.\\

Here is another application of simplicial complexity. In \cite{BPS12} is proved that for any group $G$
$$
\Sys(G)\geq C \, \frac{b_1(G) +1}{(\ln (b_1(G)+2))^2}
$$
for some universal constant $C$ where $b_1(G)$ denotes the first real Betti number of $G$. But this lower bound is inefficient for groups whose first integral homology group has large torsion, such as $\Z_m$ when $m$ is large. In converse simplicial complexity is quite sensitive to torsion elements (see Proposition  \ref{prop:simplexe2}), and using correspondence of Theorem \ref{th:syscomplexity} we are able to prove that for any positive $\varepsilon$
$$
\Sys(G) \geq (\ln |\mbox{Tors}\,H_1(G, \Z)|)^{1-\varepsilon}
$$
for groups with large torsion in homology. In Theorems \ref{teo:sigma.abelien} and \ref{teo:sigma.abelien2} we complete the study of $\sigma(G)$ for abelian groups in terms of two parameters: the number of elements in $G$ and the number of its invariant factors (which coincides with its minimal number of generators). Using this estimate we conclude that
\begin{equation}\label{eq:syscyclique}
(\log_2 m)^{1 - \varepsilon} \leq \Sys (\Z_m) \leq 1.43 \, \log_2 m
\end{equation}
for any positive $\varepsilon$ provided $m$ is large enough. In comparison, inequality (\ref{eq:finitudeRS}) implies that $\Sys(\Z_m)\to \infty$ for large values of $m$ but gives no information about the asymptotic behaviour of this sequence.\\

In literature can be found two other invariants that measure the complexity of finitely presented groups: the $T$-invariant of Delzant \cite{Del96} and the $c$-complexity of Matveev \& Pervova \cite{MP01,PP08}. We compare in subsection \ref{subsec:comparison} simplicial complexity with both  $T$-invariant and $c$-complexity. 

Delzant's $T$-invariant is additive for free product, this nice property being the main reason of its introduction. Nevertheless it is not sensitive to $2$-torsion which makes it not pertinent for systolic considerations on groups.

The $c$-complexity was introduced to measure the complexity of $3$-manifolds using their fundamental group. Despite the fact that simplicial complexity and $c$-complexity agree up to some universal constants (see Proposition \ref{prop:c-kappa.comparaison}), they are of a different kind: simplicial complexity is topological, while $c$-complexity is algebraical which makes it not suitable for systolic geometry. Indeed our central result is the comparison Theorem \ref{th:syscomplexity} and its proof reveals a natural connection between systolic area and simplicial complexity. 
Furthermore we reprove for simplicial complexity some analogs of classical results on $c$-complexity, like the lower bound in terms of the torsion (Proposition \ref{prop:simplexe2}) and the estimate for abelian groups (Theorem \ref{teo:cyclique}). Besides the fact that these results give an alternative proof of the corresponding results for $c$-complexity (compare with \cite{PP08}), they are always more precise than if we simply have used the corresponding results for $c$-complexity and the linear equivalence with simplicial complexity.

So, besides its own interest as a simple and concrete invariant associated to a group, simplicial complexity is shown here to have natural and strong geometric applications.\\

In the last section we present some applications of simplicial complexity to systolic geometry of higher dimensional spaces. One key result is the following estimate for the systolic volume $\SSys$ of any $(2n+1)$-dimensional lens space $L_m^{2n+1}$ with fundamental group $\Z_m$ (see section \ref{sec:sysvol} for definitions).

\begin{theorem}
There exists positive constants $C_n$, $C'_n$ and $D_n$ depending only on the dimension $n$ such that for any integer $m\geq 2$ 
\begin{equation}\label{eq:sigma(L_n(m))}
C_n\left(\ln m\right)^{1 - \frac{C'_n}{\sqrt{\ln \ln m}}} \leq \SSys(L_m^{2n+1}) \leq D_n \, m^n.
\end{equation}
\end{theorem}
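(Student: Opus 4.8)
The plan is to combine three ingredients: the comparison between systolic volume and simplicial complexity of the fundamental group in the spirit of Theorem~\ref{th:syscomplexity}, the estimates on $\kappa(\Z_m)$ furnished by Theorem~\ref{th:borne.sup}, and an explicit construction of metrics on $L_m^{2n+1}$ for the upper bound. For the lower bound in \eqref{eq:sigma(L_n(m))} I would first establish a general inequality of the form $\SSys(X) \geq c_n\,\Sys(\pi_1(X))$ for a closed aspherical-free $(2n+1)$-manifold, or more directly $\SSys(X)\geq c_n\,\kappa(\pi_1(X))$, by the standard Federer--Fleming / Gromov filling argument: a metric on $X$ realizing systolic volume close to the infimum gives, after pushing to a $2$-complex via a map inducing an isomorphism on $\pi_1$ and controlling areas, a $2$-complex with fundamental group $\Z_m$ whose normalized area is controlled by $\SSys(L_m^{2n+1})$. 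Since $\pi_1(L_m^{2n+1})=\Z_m$ has zero free index, Theorem~\ref{th:syscomplexity} then gives $\kappa(\Z_m)\leq 2\pi\,\Sys(\Z_m)\cdot(\text{const})$, actually $2\pi\Sys(\Z_m)\le\kappa(\Z_m)$, and combined with the lower bound $\kappa(\Z_m)\gtrsim \log_2 m$ from the $\Z_m$-part of Theorem~\ref{th:borne.sup} (equivalently \eqref{eq:syscyclique}), one extracts $\SSys(L_m^{2n+1}) \geq C_n(\ln m)^{1 - C'_n/\sqrt{\ln\ln m}}$. The exponent $1 - C'_n/\sqrt{\ln\ln m}$ is exactly the shape one gets by inverting the quasi-linear relation $\kappa \le C\sigma^{1+C'/\sqrt{\log_2\sigma}}$ of Theorem~\ref{th:syscomplexity}, so the dependence on $m$ is inherited verbatim from there; only the multiplicative constants need to absorb the dimension.

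For the upper bound $\SSys(L_m^{2n+1})\leq D_n\,m^n$ I would build an explicit piecewise-smooth metric on $L_m^{2n+1}=S^{2n+1}/\Z_m$. Start from the round sphere $S^{2n+1}$ of some radius $r$; the $\Z_m$-action by the standard free isometric rotation has all orbits of diameter comparable to $r$, so the quotient has injectivity radius (hence systole, for non-contractible loops) bounded below by a quantity $\sim r/m$ coming from the shortest orbit segment, while $\vol(L_m^{2n+1}) = \vol(S^{2n+1}(r))/m \sim r^{2n+1}/m$. Normalizing $\Sys = 1$ forces $r \sim m$, which yields $\vol \sim m^{2n+1}/m = m^{2n}$ — this is too big. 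The fix is the usual Berger-type collapsing: shrink the metric in the direction of the Hopf-like $S^1$-fibers (the orbits of the circle action containing the $\Z_m$-action) by a factor depending on $m$, which decreases the volume without decreasing the systole below the required threshold, because non-contractible loops must wind in the base directions. A careful choice of the fiber-scaling parameter $\lambda\sim 1/m^{?}$ balances "systole stays $\gtrsim 1$" against "volume drops to $O(m^n)$"; this is the standard computation showing $\SSys$ of lens spaces grows polynomially, and the optimal balance gives the exponent $n$.

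The main obstacle I expect is the lower bound, specifically making rigorous the passage from a good metric on the $(2n+1)$-manifold to a $2$-complex with controlled systolic area and the same fundamental group: one needs a codimension-$(2n-1)$ cutting / retraction argument (à la Gromov's proof that $\SSys$ of an essential manifold is bounded below) that produces a $2$-dimensional object whose area is $\lesssim \SSys(L_m^{2n+1})\cdot(\text{universal})$ in the right units, and one must check that the systole does not drop — i.e. that no new contractions are created. This is where the constants $C_n$, $C'_n$ genuinely depend on $n$. Once that reduction is in place, everything funnels through Theorem~\ref{th:syscomplexity} and Theorem~\ref{th:borne.sup} and the stated double inequality follows; the upper bound is comparatively routine once the fiber-collapsing parameter is optimized.
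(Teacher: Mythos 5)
Your lower bound is not established as written. Its hinge is an inequality of the form $\SSys(M)\geq c_n\,\Sys(\pi_1(M))$ (or $c_n\,\kappa(\pi_1(M))$), obtained by ``pushing'' a near-optimal metric on the $(2n+1)$-manifold to a $2$-complex carrying the fundamental group with controlled area and without dropping the systole. No such dimension reduction is known: Federer--Fleming deformation controls volumes of cycles pushed into skeleta, but it does not produce a $2$-complex with fundamental group $\Z_m$ and controlled systolic ratio, and you yourself flag this step as the main obstacle without supplying an argument. The paper avoids exactly this issue by working at the level of the homology class ${\bf a}\in H_{2n+1}(\Z_m,\Z)$ realized by the fundamental class of the lens space: it invokes Gromov's inequality \cite[6.4.C'']{Gro83} bounding $\SSys({\bf a})$ from below by $c_n\,h({\bf a})/\exp\big(c'_n\sqrt{\ln h({\bf a})}\big)$, where $h({\bf a})$ is the simplicial height of the class, then Proposition \ref{prop:simplexe2} to get $h({\bf a})\geq 2\log_3 t_1({\bf a})$, and finally Lemma \ref{lemma:representation} (a Bockstein and cup-product argument) to show that \emph{any} geometric cycle representing a generator of $H_{2n+1}(\Z_m,\Z)$ has $|\mbox{Tors}\,H_1|\geq m$. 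Note that such a representing cycle need not have fundamental group $\Z_m$ at all, which is precisely why the paper bounds $1$-torsion cohomologically instead of appealing to $\Sys(\Z_m)$ or $\kappa(\Z_m)$; the constants $C_n$, $C'_n$ come from Gromov's inequality, not from Theorem \ref{th:syscomplexity}. Your use of Theorem \ref{th:syscomplexity} and Corollary \ref{cor:sigma.Z.m} would only become relevant once the missing comparison between $\SSys$ in dimension $2n+1$ and two-dimensional systolic invariants of the group is proved, and that is an open-type problem, not a routine cutting argument.

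The upper bound also has a genuine error of direction. In the round lens space the systolic loop \emph{is} the fiber: the image of a Hopf fiber generates $\pi_1=\Z_m$ and has length $2\pi/m$, so non-contractible loops do not have to move in base directions. Shrinking the fiber by $\lambda<1$ therefore shrinks the systole to $2\pi\lambda/m$ while the volume only drops by the factor $\lambda$, and the ratio becomes of order $(m/\lambda)^{2n}\geq m^{2n}$, i.e.\ worse than the round metric. The deformation that works is the opposite one: expand the fiber (equivalently shrink the base) until the fiber length $2\pi\lambda/m$ matches the length $\approx m^{-1/2}$ of base shortcuts enclosing holonomy $2\pi/m$; at $\lambda\approx\sqrt m$ one gets systole $\approx m^{-1/2}$, volume $\approx \lambda/m$, and ratio of order $m^n$ up to dimensional constants. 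Even after this correction, the computation is clean only when $\Z_m$ sits inside a \emph{free} circle action, i.e.\ for equal weights; for general weights $q_i$ the relevant circle action is only a Seifert fibration with singular orbits and the estimate needs extra care. The paper's proof bypasses all of this: it cubulates $L_m^{2n+1}$ starting from the standard CW structure with one cell in each dimension, obtaining on the order of $2^{n+1}(2(n+1))!\,(2(n+1))\,m^n$ unit Euclidean cubes, and quotes \cite[Lemma 5.6]{BB10} to guarantee that the resulting polyhedral metric has systole at least $2$, which yields $\SSys(L_m^{2n+1})\leq D_n\,m^n$ uniformly over all lens spaces.
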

While the lower bound is of the same kind as that for $\Sys(\Z_m)$, the best known upper bound is thus polynomial of degree $n$ in $m$. Observe that this degree is half of the degree in the trivial upper bound $\approx m^{2n}$ given by the round metric. In particular round metrics on lens spaces are not systolically extremal for large $m$.
Determining the asymptotic behaviour of $\SSys(L_m^{2n+1})$ in terms of both $m$ and $n$ is still an open question.

\section{Simplicial complexity}\label{sec:simplicialcomplexity}

In this section we introduce the definition of simplicial complexity and give some of its basic properties. We then compare this new complexity with the two other standard complexities, namely the $T$-invariant of Delzant \cite{Del96} and the $c$-complexity of Matveev \& Pervova \cite{MP01}. Next we show a central lower bound for the simplicial complexity in terms of the $1$-torsion of the group.

\subsection{Definition and examples}\label{sec:def}

Given a finite simplicial complex $P$ we denote by $s_k(P)$ the
number of its $k$-simplices.

\begin{definition}\label{def:simplicialcomplexity} Let $G$ be a group.
We define its {\it simplicial complexity} $\kappa(G)$ by the following formula:
$$
\kappa(G) := \mathop{\inf}\limits_{\pi_1(X)=G} s_2(X),
$$
the infimum being taken over all simplicial  $2$-complexes
$X$ with fundamental  group $G$.
A $2$-complex $X$ is then said  {\it minimal for $G$} if $\pi_1(X) = G$,  $s_2(X) = \kappa(G)$ and each vertex is incident to at least $2$ edges.
If $G$ is of free index zero the last condition is equivalent to the one that each vertex is incident to a face.
\end{definition}

It is important to note that a notion of complexity has already appeared in the context of topology and geometry of $3$-manifolds. This notion originally due to S.~Matveev relies on a special class of subpolyhedron called spine, and is in general very hard to compute. We refer the reader to the book \cite{Mat07} for a good introduction as well as several applications for this complexity. Also remark that there exist variations of this notion that are defined using pseudo-triangulations or triangulations of $3$-manifolds instead of spines (see the recent articles \cite{MPV09, JRT09, JRT11, JRT13}). The articles \cite{Cha15, Cha16} contain several comparison results between these various complexities.\\

Going back to our definition \ref{def:simplicialcomplexity}, we see that simplicial complexity satisfies the following properties:

\medskip

\noindent {\bf 1.} $\kappa(G) = 0$ if and only if $G$ is a free group.

\medskip

\noindent {\bf 2.} The free product of two groups $G_1$ and $G_2$ satisfies
\begin{equation}\label{ineq:kappa}
\kappa(G_1 \ast G_2) \leq \kappa(G_1) + \kappa(G_2).
\end{equation}
But simplicial complexity is not additive with respect to free product: if $\kappa(G_1)$ and $\kappa(G_2)$ are both positive, inequality (\ref{ineq:kappa}) can be strengthened by
$$
\kappa(G_1 \ast G_2) < \kappa(G_1) + \kappa(G_2).
$$
For this fix two $2$-complexes $X_1$ and $X_2$ which are minimal for $G_1$ and $G_2$ respectively and glue them together by identifying one $2$-simplex of $X_1$ with another $2$-simplex of $X_2$ (the choice of these two $2$-simplices being not relevant).

\medskip

\noindent {\bf 3.} For a simplicial complex $P$, its {\it
simplicial height} $h(P)$ is the total number of its simplices of any dimension. This invariant
was introduced in \cite{Gro96} and satisfies
$$
h(P) \geq \kappa(\pi_1(P)) .
$$

\medskip

\begin{example}\label{ex:table}
Even for groups whose structure is simple, the exact value of
$\kappa$ seems hard to compute. For small values up to $17$ the following table describes the situation, see \cite{Bul14}. Here $K_2$ denotes the fundamental group of the Klein bottle, while the annotation $(\ast)$ means that the corresponding minimal complex is unique and $?$ that there might be some others groups with the same simplicial complexity.

\medskip


\begin{center}
{\renewcommand{\arraystretch}{1.5}
\begin{tabular}{||p{1.5cm}||p{1.5cm}|p{1.5cm}|p{1.5cm}|p{1.5cm}|} \hline
\centering$\kappa(G)$&\centering {10}&\centering {14}&\centering{16}&{\centering {17}} \\\hline
\centering$G$ & \centering$\Z_2(\ast)$ & \centering$\Z\oplus\Z(\ast)$ &\centering$K_2$ & {\centering$\Z_3,\ ?$} \\\hline
\end{tabular}}
\end{center}
$\ $

\medskip
For instance there might be several groups with simplicial complexity $17$ as $17\le \kappa(\Z_2\ast\Z_2)\le 18$ according to \cite{Bul14}.
The unique minimal complex for $\Z_2$ is the quotient of the icosahedron by the central symmetry, see Figure \ref{fig-trigRP2}.

\begin{figure}[h]
\begin{center}
\includegraphics[width=0.5\linewidth]{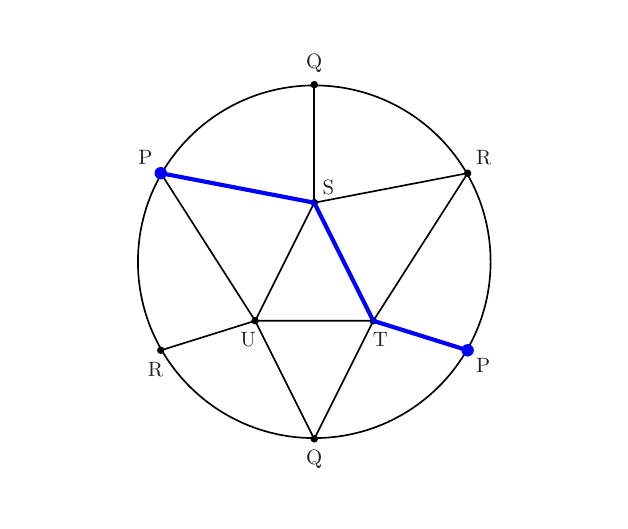}
\caption{Minimal complex for $\Z_2$.} \label{fig-trigRP2}
\end{center}
\end{figure}

For $\Z \oplus \Z$ the minimal complex is also unique and is given by the minimal triangulation of the $2$-torus whose fundamental domain is depicted in Figure \ref{fig-trigtore}.

\begin{figure}[h]
\begin{center}
\includegraphics[width=0.6\linewidth]{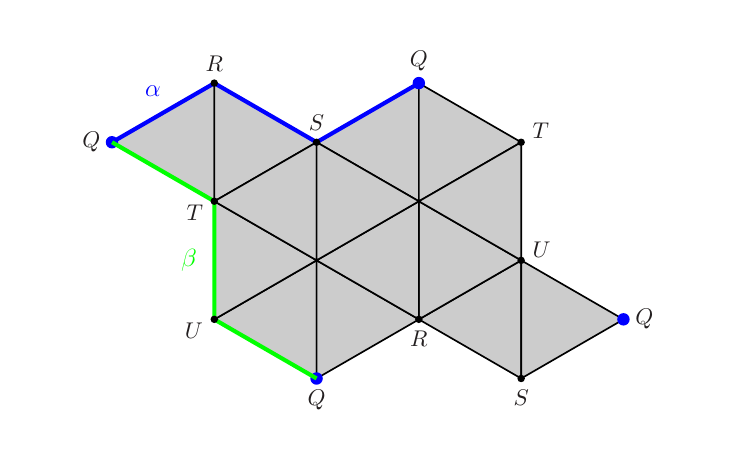}
\caption{Minimal complex for $\Z \oplus \Z$.} \label{fig-trigtore}
\end{center}
\end{figure}

These two minimal complexes will be used in the sequel.
\end{example}

\begin{example}\label{ex:surfaces}
For surface groups with large genus, the exact computation of
their complexity remains an open problem. We can nevertheless give
some bounds in terms of their genus.

Let $\pi_1 (S_l)$ be the fundamental group of an orientable surface of
genus $l \geq 1$. By elementary algebraic and combinatorial
considerations
\begin{equation} \label{eq:kappasurf1}
\kappa(\pi_1(S_l)) \geq {4\over3}l.
\end{equation}
Besides
\begin{equation} \label{eq:kappasurf2}
\kappa(\pi_1(S_l)) \leq 4(l-1) + 2\left\lceil{7 + \sqrt{1 + 48l}\over
2}\right\rceil
\end{equation}
by a result of Jungerman \& Ringel \cite{Ring}. Here $\lceil a\rceil$ denotes the integer part of $a+1$ if $a$ is
not an integer and $a$ for integers. Strictly speaking, the upper
bound is only available for $l \neq 2$. For $l = 2$ we have to
replace the upper bound by  $24$, see \cite{Ring}.
Observe that the upper bound (\ref{eq:kappasurf2}) is sharp for $\pi_1(S_1)=\Z \oplus
\Z$.
Because $\kappa(\Z \oplus \Z) = 14$  we can easily derive that the free abelian group $A_n$ of
rank $n$ satisfies
$$
{1 \over 2}n(n-1) \leq  \kappa(A_n) \leq 7n(n-1).
$$
Here the lower bound is given by the second Betti
number. For the upper bound consider the $n$-dimensional torus $\TT^n$ and the  $2$-dimensional skeleton of its standard cell decomposition. This $2$-skeleton consists of ${n(n-1) \over 2}$ tori of dimension $2$, all of which we endow with the minimal triangulation shown in Figure \ref{fig-trigtore}. These individual triangulations can be arranged in a global triangulation of the $2$-skeleton by choosing the blue and green lines in Figure \ref{fig-trigtore} to lie in the $1$-skeleton of the standard cell decomposition of  $\TT^n$. This gives a $2$-complex with fundamental group $A_n$ and a triangulation with exactly $7n(n-1)$ triangles. 

Remark that the precise computation for $\kappa(A_n)$ remains open.
\end{example}

Subadditive property (\ref{ineq:kappa}) implies that for any group $G$,
\begin{equation}\label{eq:*Z}
\kappa(G \ast \Z) \leq \kappa(G).
\end{equation}
As for systolic area the question to know whether or not this inequality is actually an equality is open and fundamental. Because of (\ref{eq:*Z}) we will consider in the sequel only groups with free index zero.

If $G$ is a free index zero group and $X$ is a minimal complex for $G$, it is straightforward to check that

\smallskip

        \noindent ($M_1$) any edge of $X$ is adjacent to at least two $2$-simplices,

\smallskip

        \noindent ($M_2$) any vertex of $X$ is adjacent to at least four $2$-simplices.

\smallskip

This properties of minimal complexes will be usefull in the sequel.

\subsection{Comparison with other complexities}\label{subsec:comparison}
There exist two other numerical invariants which measure the complexity of groups. First recall that given a presentation
$$
\mathcal P=\left\langle a_1,\ldots,a_n\mid r_1,\ldots,r_m\right\rangle
$$
of a group $G$, its length is the number
$$
\ell({\mathcal P})=\displaystyle \sum _{i=1}^m\left|r_i\right|
$$
where $\left|\, \cdot\,\right|$ denotes the word length associated to the system of generators $\{a_1,\ldots,a_n\}$. \\

The two other types of complexity for groups are the following:
\begin{itemize}

\item  the $c$-complexity introduced by Matveev \& Pervova \cite{MP01} and defined as the minimal length $c(G)$ of a finite presentation,

\item  the $T$-invariant introduced by Delzant \cite{Del96} and defined by
$$
T(G):=\min_{\mathcal P=\left\langle a_1,\ldots,a_n\mid r_1,\ldots,r_m\right\rangle} \sum_{i=1}^m
\max\left\{\left|{r_i}\right|-2,0\right\}.
$$
\end{itemize}

Any  group can be presented with relations of length either $2$ or $3$, such a representation being called {\it triangular}.
The $T$-invariant is nothing else than the minimal number of relations of length $3$ for a triangular representation.

One fundamental property of the $T$-invariant is its linearity with respect to free products (see \cite{Del96}):
$$
T(G_1 \ast G_2) = T(G_1) + T(G_2).
$$
But of course this invariant is not sensitive to torsion elements of order $2$, for instance
$$
T(\Z_2 \ast \dots \ast \Z_2) = 0.
$$
In \cite{PP08}(see also \cite{KS05}) it is shown that $T$-invariant and $c$-complexity satisfy the following relations:

\smallskip

\begin{itemize}
\item $T(G) \leq c(G)$,

\smallskip

\item $c(G) \leq 9 \, T(G)$ if $G$ does not admit free factor isomorphic to $\Z_2$,

\smallskip

\item $c(G) \leq 3 \, T(G)$ if $G$ has no torsion element of order $2$.
\end{itemize}

On the other hand $c$-complexity and simplicial complexity coincide up to some universal constants:

\begin{proposition}\label{prop:c-kappa.comparaison}
For any group  $G$ we have
\begin{equation}\label{eq:c-kappa.comparaison}
{1 \over 6} \, \big(\kappa(G)+h\big) \leq c(G) \leq 3 \, (\kappa(G) - 4),
\end{equation}
where $h$ is the minimal number of relations over finite presentations of $G$.
\end{proposition}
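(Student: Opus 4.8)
The plan is to establish the two inequalities in (\ref{eq:c-kappa.comparaison}) separately, each by an explicit construction that converts one combinatorial presentation of $G$ into the other.

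\emph{Upper bound $c(G) \leq 3(\kappa(G)-4)$.} Here I would start from a minimal $2$-complex $X$ for $G$, so that $s_2(X) = \kappa(G)$. The idea is to read off a triangular presentation of $G = \pi_1(X)$ from the $1$-skeleton and the $2$-simplices. Collapse a spanning tree of $X^{(1)}$: the remaining edges become generators $a_1,\dots,a_n$, and each of the $s_2(X)$ triangles contributes one relator of length $\leq 3$ (length exactly $3$ in general, shorter if some boundary edge lay in the spanning tree). This already gives a presentation of length at most $3\,s_2(X) = 3\kappa(G)$; the subtraction of $4$ should come from the structural constraints ($M_1$), ($M_2$) on minimal complexes — e.g. near a vertex of minimal valence one can arrange that several of the incident triangles have a boundary edge in the tree, or contribute a shorter relator, shaving off a bounded amount. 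I would make this gain precise by a small local argument around one chosen vertex (or one chosen maximal simplex), exploiting that at least four $2$-simplices meet there.

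\emph{Lower bound $\tfrac16(\kappa(G)+h) \leq c(G)$.} Conversely, start from a presentation $\mathcal P = \langle a_1,\dots,a_n \mid r_1,\dots,r_m\rangle$ realizing $c(G) = \ell(\mathcal P) = \sum |r_i|$, with $m$ chosen minimal (so $m = h$ for an optimal such presentation). First replace $\mathcal P$ by a triangular presentation: each relator $r_i$ of length $|r_i|$ can be subdivided into at most $|r_i|-2$ relators of length $3$ (introducing auxiliary generators), so the number of triangular relators is controlled linearly by $\ell(\mathcal P)$ and $m$. Then build the standard presentation $2$-complex: one vertex, one edge per generator, one triangle per length-$3$ relator (and bigons for length-$2$ relators, which I would eliminate by an edge identification rather than leave as a degenerate $2$-simplex). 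This yields a simplicial-ish complex whose number of $2$-simplices is $O(\ell(\mathcal P)+m)$; subdividing to make it genuinely simplicial multiplies the face count by a universal constant. Tracking constants carefully through "subdivide relators to length $3$" and "barycentrically repair the presentation complex" should land exactly on the factor $6$ and the additive $h$.

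\emph{Main obstacle.} The routine part is the existence of the constructions; the delicate part is the bookkeeping of constants — in particular getting the constant $6$ (and not something larger) in the lower bound, which forces one to be economical at every step: not subdividing more than necessary, reusing edges across relators, and handling length-$2$ relators by identification rather than by adding faces. The additive term $h$ (minimal number of relations) is the signature that one must keep the number of relators, not just the total length, under control throughout. I would also need to check the two easy endpoint cases — free $G$ (both sides $0$, modulo the additive constants, after noting $\kappa(G)=0$ and $c(G)=0$) and the degenerate case where $\kappa(G)-4 < 0$ is vacuous — so that the stated inequalities hold for all $G$ without exception.
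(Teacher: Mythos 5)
Your overall strategy coincides with the paper's (complex $\to$ presentation by collapsing a maximal tree for the right-hand inequality, presentation $\to$ triangulated presentation complex for the left-hand one), but at both of the quantitative points that carry the actual content of the proposition you only announce a computation instead of giving one, and in each case the mechanism you hint at would not deliver the stated constant. For the right-hand side, obtaining $3(\kappa(G)-4)$ rather than $3\kappa(G)-O(1)$ requires eliminating the contribution of at least four $2$-simplices \emph{entirely}, not merely shortening their relators: a triangle can have at most two of its three edges in a spanning tree, so your device of ``boundary edges in the tree'' saves at most $2$ letters per triangle, hence at most $8$ letters from the four triangles guaranteed by $(M_2)$, short of the needed $12$. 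The paper's proof uses a stronger collapse: it contracts \emph{every} $2$-simplex adjacent to a chosen root $p$ of the maximal tree (their union is the closed star of $p$, hence contractible) and then the rest of the tree; the at least four triangles containing $p$ then produce no $2$-cells at all, each surviving $2$-cell is attached along at most three $1$-cells, and one reads off a presentation of length at most $3(s_2(X)-4)$.

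For the left-hand side, your route (triangularize the presentation, form the presentation complex of the triangular presentation, then ``subdivide by a universal constant'') cannot be expected to land on the constant $6$, nor does it produce the additive $h$: a relator of length $\ell$ becomes $\ell-2$ triangular relators, and once you make the resulting cell complex genuinely simplicial each length-$3$ disk costs a fixed but sizeable number of triangles (with the paper's own economical triangulation a length-$3$ disk already takes $17$), so this path gives a constant well above $6$. The paper skips triangularization: it takes an arbitrary presentation (normalized so every length-$2$ relator is $a_s^{\pm2}$ -- note that relators $a^{\pm2}$, unlike $ab$, cannot be removed by an edge identification), divides each generator circle into exactly $3$ edges, and triangulates each relator disk \emph{directly} with $6|r_j|-2$ or $6|r_j|-1$ triangles according to parity, using the $10$-triangle minimal $\R P^2$ for $a_s^{\pm2}$; this yields $s_2(X)\le 6\,\ell(\mathcal P)-m$, and since every presentation has $m\ge h$ one gets $\kappa(G)+h\le 6\,c(G)$. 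In particular you do not need (and may not be able to find) a presentation realizing $c(G)$ with exactly $h$ relators; $m\ge h$ suffices, and the ``$-m$'' must come from a per-relator saving in the triangulation, which your bookkeeping has no mechanism to produce. Finally, the case $\kappa(G)-4<0$ is not ``vacuous'': for a free group the right-hand inequality would read $0\le-12$; the statement implicitly concerns the groups for which the minimality properties $(M_1)$, $(M_2)$ apply.
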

\begin{proof}
To prove the left-hand side we start with an arbitrary finite presentation
$$
\mathcal P= \langle a_1, \ldots , a_n | r_1, \ldots r_m \rangle
$$
of $G$. We can suppose this presentation contains no relation of length $1$ and that every relation of length $2$ is of the form $r_j = a_s^{\pm 2}$.
Associated to $\mathcal P$ we construct a simplical $2$-complex $X$ with $\pi_1(X)=G$ and $s_2(X)\leq 6 \, \ell(\mathcal P)-m$. Start with $Y =\vee_{i=1}^n S^1_i$ the wedge sum of $n$ circles whose base point is denoted by $P$, each circle corresponding to a generator of the presentation $\mathcal P$. For each $j=1,\ldots,m$ we glue a $2$-disk $D^2_j$ by identifying its boundary with the loop described by the relation $r_j$.  The topological space thus obtained is denoted by $X$  and can be triangulated as follows. First divide each circle into three edges, the base point $P$ corresponding to one of the vertices.
 If $|r_j| \geq 3$ we triangulate the disk $D^2_j$ according to the parity of $|r_j|$. If $|r_j|=2k$, we triangulate $D^2_j$ as in Figure \ref{fig-trigB2_pair} and get a contribution of $6|r_j|-2$ triangles (remark the blocks made of nine triangles). If $|r_j|=2k+1$, we triangulate $D^2_j$ as in Figure \ref{fig-trigB2_impair} and get a contribution of $6|r_j|-1$ triangles.

\begin{figure}[h]
\begin{center}
\includegraphics[width=0.6\linewidth]{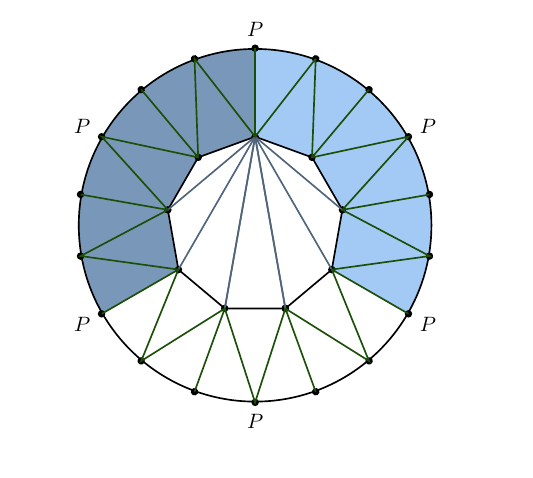}
\caption{Triangulation of $D^2_j$ for $|r_j|$ even ($|r_j|=6$).} \label{fig-trigB2_pair}
\end{center}
\end{figure}

\begin{figure}[h]
\begin{center}
\includegraphics[width=0.6\linewidth]{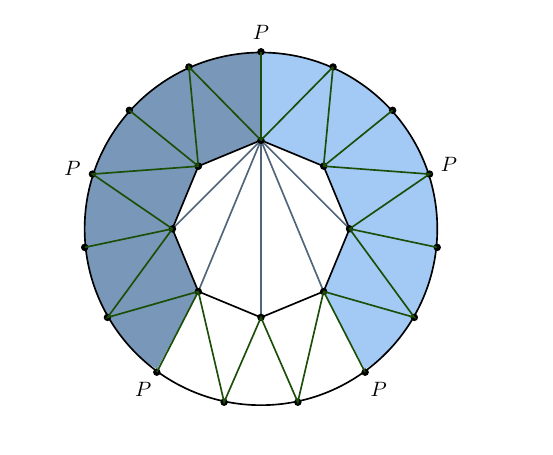}
\caption{Triangulation of $D^2_j$ for $|r_j|$ odd ($|r_j|=5$).} \label{fig-trigB2_impair}
\end{center}
\end{figure}

If $|r_j| = 2$ we triangulate the corresponding projective plane like in Figure \ref{fig-trigRP2} and get a contribution of $10= 5|r_j|$ simplices of dimension $2$.

Finally the triangulation of $X$ thus defined satisfies
$$
s_2(X)\leq 6\mathop{\sum}\limits_{j=1}^m |r_j|-m\leq 6 \, \ell(\mathcal P)-m.
$$
Because we start with any finite presentation $\mathcal P$ we conclude that $\kappa(G)+h \leq 6 \, c(G)$.\\

The right-hand side of the inequality is proved as follows. Consider a simplicial complex $X$ minimal for $G$ and having a minimal number of edges. Fix a maximal tree of the $1$-dimensional skeleton of $X$ with root $p$ such that each edge incident to $p$ belongs to the tree.
We first contract every $2$-simplex incident to $p$ and then the remaining part of the maximal tree into a point. This gives a finite cell complex $\tilde{X}$ of dimension $2$ whose $2$-cells are glued along at most $3$ cells of dimension $1$. Because $X$ is minimal, the root $p$ was incident to at least four $2$-simplices, so the number of $2$-cells of $\tilde{X}$ is  at most $s_2(X)-4$.
This topological space being homotopy equivalent to $X$ we get a presentation of $\pi_1 (\tilde{X})=G$ whose length is at most $3\, (s_2(X)-4)$.
\end{proof}
\begin{remark}
If $G$ has no torsion element of order $2$, the proof implies the following inequality:
$$
\kappa(G) \leq 5\, c(G) + T(G)\leq 13\, T(G).
$$
\end{remark}
\vskip7pt

\subsection{Lower bound in terms of $1$-torsion}\label{sub:section}

The simplicial complexity $\kappa(G)$ is quite sensitive to the
number of torsion elements in $H_1(G, \Z)$. The next proposition
will be used several times in the sequel.

\begin{proposition}\label{prop:simplexe2} Let $X$ be a simplicial complex of dimension $2$. Then
$$
s_2(X) \geq 2 \, \log_3 \, |\mbox{Tors}\,H_1(X, \Z)|.
$$
In particular, every group $G$ satisfies the
inequality
$$
\kappa(G) \geq 2\, \log_3 \, |\mbox{Tors}\,H_1(G, \Z)|.
$$
\end{proposition}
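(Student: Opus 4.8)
The plan is to bound the torsion of $H_1(X,\Z)$ by looking at the cellular chain complex of $X$ in dimensions $2$, $1$, $0$ and to exploit that the boundary matrix $\partial_2$ has a very sparse and bounded-entry structure coming from the fact that each $2$-simplex has exactly three edges. Write $\partial_2 \colon C_2(X) \to C_1(X)$ for the cellular boundary map with respect to the bases given by the simplices of $X$; it is an $s_1(X) \times s_2(X)$ integer matrix each of whose columns has at most three nonzero entries, all equal to $\pm 1$. Since $\mathrm{Tors}\, H_1(X,\Z)$ is a quotient of a subgroup of $C_1(X)/\mathrm{im}\,\partial_2$, its order divides the product of the nonzero elementary divisors of $\partial_2$, and that product equals the gcd of the maximal-size nonvanishing minors of $\partial_2$ — in particular it is bounded above by the maximal absolute value of any square minor of $\partial_2$ of size $r = \mathrm{rank}\,\partial_2 \le s_2(X)$.

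The next step is to estimate such a minor. A square submatrix of $\partial_2$ of size $r$ has $r$ columns, each with at most three entries in $\{-1,0,1\}$; by the Hadamard-type bound adapted to a matrix with at most three $\pm 1$'s per column (expanding along columns, or bounding $|\det|$ by the product of the $\ell_1$-norms of the columns after a suitable normalization), one gets $|\det| \le 3^{r/2}$ up to a harmless constant — more precisely, pairing up columns or using the bound $|\det M| \le \prod_j \|M_{\cdot j}\|_2 \le (\sqrt3)^r$ gives exactly $|\det| \le 3^{r/2}$. Hence $|\mathrm{Tors}\, H_1(X,\Z)| \le 3^{r/2} \le 3^{s_2(X)/2}$, which rearranges to $s_2(X) \ge 2\log_3 |\mathrm{Tors}\, H_1(X,\Z)|$. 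The statement for a group $G$ then follows immediately by applying the inequality to a complex $X$ minimal for $G$, since $H_1(G,\Z) = H_1(X,\Z)$ and $s_2(X) = \kappa(G)$.

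The main obstacle is getting the constant right: one must be careful that $|\mathrm{Tors}\, H_1|$ really is controlled by the elementary divisors of $\partial_2$ alone (not of $\partial_1$), which is true because $H_1 = \ker\partial_1 / \mathrm{im}\,\partial_2$ and $\ker\partial_1$ is a direct summand (a saturated subgroup) of $C_1(X)$, so the torsion of the quotient is exactly the product of the nonzero elementary divisors of $\partial_2$ viewed as a map into $\ker\partial_1$; and this product divides the gcd of the $r\times r$ minors of $\partial_2$ in the original basis. The other delicate point is the Hadamard estimate: the clean bound $3^{r/2}$ needs each column to have Euclidean norm at most $\sqrt3$, which holds precisely because a $2$-simplex is a triangle with three faces — this is where the dimension hypothesis enters. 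Everything else is routine linear algebra over $\Z$.
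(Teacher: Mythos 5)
Your proof is correct and follows essentially the same route as the paper: control $|\mbox{Tors}\,H_1|$ by the gcd of maximal minors of the degree-$2$ (co)boundary matrix via Smith normal form, then apply Hadamard's inequality using that each $2$-simplex contributes exactly three $\pm 1$ entries, giving the bound $3^{r/2}$ with $r\leq s_2(X)$. The only cosmetic difference is that you work with $\partial_2$ on chains together with a saturation argument for $\ker\partial_1\subset C_1(X)$, whereas the paper transposes to the coboundary $d^1:C^1\to C^2$ and invokes universal coefficients ($\mbox{Tors}\,H_1(X,\Z)\simeq\mbox{Tors}\,H^2(X,\Z)$); the linear-algebra estimate is identical.
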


\begin{proof} Consider the complex of simplicial cochains
\begin{equation}\label{eq:cochanes}
C^1(X, \Z) \mathop{\longrightarrow}\limits^{d^1} C^2(X, \Z)
\mathop{\longrightarrow}\limits^{d^2} 0.
\end{equation}
The universal coefficient theorem implies a duality between
homology torsion and cohomology torsion, and we have  (see
\cite[Corollary 3.3]{Hat02} for instance)
\begin{equation}\label{eq:dual}
\mbox{Tors}\, H_1(X, \Z) \simeq \mbox{Tors}\, H^2(X, \Z).
\end{equation}
This implies that $|\mbox{Tors}\,H_1(X, \Z)|=
|\mbox{Tors}\big{(}C^2(X, \Z)/\mbox{Im} \, d^1\big{)}|$.

We endow $C^i(X, \Z)$ with the basis dual to the simplicial basis
of $C_i(X, \Z)$.  Let $D$ denote the matrix of $d^1$ with respect
to these bases. The matrix $D$ has  $s_2(X)$ rows, and each row
has exactly three non zero elements whose value is either $1$ or
$-1$. It follows that every row vector of $D$ has Euclidean length
$\sqrt 3$. If we interpret the determinant of a square matrix $V$
of order $k$ as the volume of the parallelotope generated by its
row vectors, we see that for any square submatrix  $V$ of $D$
of order $k$
$$
|\mbox{det}V| \leq ({\sqrt 3})^k.
$$
Now if we denote by $t(D)$ the greatest common divisor of all minors of $D$ of order $\text{rank}(D)$, we deduce that
$$
t(D) \leq (\sqrt{3})^{\text{rank}(D)} \leq (\sqrt{3})^{s_2(X)}.
$$
To conclude just remember that 
$$
t(D)=|\mbox{Tors}\left(\mbox{Coker} \, d^1\right)|=|\mbox{Tors}\left(C^2(X, \Z)/\mbox{Im} \, d^1\right)|.
$$
\end{proof}

\begin{remark}
It is proved in \cite{PP08} that for any  group $G$
$$
c(G)\geq3\log_3|\text{Tors}H_1(G, \Z)|.
$$
This combined with the right-hand side of inequality (\ref{eq:c-kappa.comparaison}) leads to a weaker estimate of  $\kappa(G)$ than the one obtained in Proposition \ref{prop:simplexe2}.
\end{remark}

\subsection{Stabilization for free products}

Given a group $G$ we denote by
$$
G_{(n)} = \underset{n}{\underbrace{G \ast \ldots \ast G}}
$$
the free product of $n$ copies of $G$. As the function $\kappa(G_{(n)})$ is sublinear in $n$ according to inequality (\ref{ineq:kappa}), we can define the {\it stable simplicial complexity} by
$$
\kappa_{\infty}(G) := \mathop{\lim}\limits_{n \rightarrow \infty}{\kappa\big{(}G_{(n)}\big{)}\over n}.
$$

In this section we show that $\kappa_{\infty}(G)>0$ for any group $G$ which is not free. That is, albeit simplicial complexity is not additive, its asymptotic behaviour for free products of the same group is essentially linear. The analogous question for systolic area is completely open.

\begin{proposition}\label{prop:kappa(G_(n))}
Any group $G$ which is not free satisfies
$$
\kappa(G) - 1 \geq \kappa_{\infty}(G) \geq \left\{
\begin{matrix}
2 \log_3 2 &  \text{if} \, \, G \, \, \text{decomposes as} \, \, G = G'
\ast \Z_2,  \\
& \\
\frac{T(G)}{3} & \text{if} \, \, G  \, \,
\text{does not admit such a decomposition}.
\end{matrix}
\right.
$$
\end{proposition}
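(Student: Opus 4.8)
The inequality $\kappa_\infty(G) \leq \kappa(G) - 1$ is immediate from the strict subadditivity observed in Property 2: gluing minimal complexes along a common $2$-simplex shows $\kappa(G_{(n)}) \leq n\kappa(G) - (n-1)$, hence $\kappa(G_{(n)})/n \leq \kappa(G) - 1 + 1/n$, and letting $n\to\infty$ gives the bound. So the content is the lower bound, and I plan to treat the two cases separately.

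For the case $G = G' \ast \Z_2$, note that $G_{(n)} = G'_{(n)} \ast (\Z_2)_{(n)}$, and since we seek a lower bound for $\kappa(G_{(n)})$ it suffices to bound $\kappa$ of any quotient onto which $G_{(n)}$ surjects with the torsion surviving; in particular $\mathrm{Tors}\,H_1(G_{(n)},\Z)$ contains $(\Z_2)^n$, so $|\mathrm{Tors}\,H_1(G_{(n)},\Z)| \geq 2^n$. Applying Proposition~\ref{prop:simplexe2} gives $\kappa(G_{(n)}) \geq 2\log_3|\mathrm{Tors}\,H_1(G_{(n)},\Z)| \geq 2n\log_3 2$, and dividing by $n$ and taking the limit yields $\kappa_\infty(G) \geq 2\log_3 2$. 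This case is essentially a direct corollary of the $1$-torsion bound.

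For the case where $G$ admits no free factor $\Z_2$, I would pass to the $T$-invariant via its additivity: $T(G_{(n)}) = n\,T(G)$. The relations from the comparison proposition give a lower bound of $\kappa$ in terms of $c$, and of $c$ in terms of $T$ when there is no $\Z_2$ free factor — however $G_{(n)}$ may acquire $\Z_2$ free factors only if $G$ itself has one, which is excluded here, so for each $n$ the group $G_{(n)}$ has no free factor $\Z_2$ either. From Proposition~\ref{prop:c-kappa.comparaison} we have $\kappa(G_{(n)}) + h \geq \kappa(G_{(n)}) \geq \tfrac{1}{3}c(G_{(n)})$ after dropping $h\geq 0$ (or more carefully $\kappa(G_{(n)}) \geq \tfrac16(\kappa(G_{(n)})+h)$ is the wrong direction — instead use the left inequality $c(G) \geq \tfrac16(\kappa(G)+h)$, i.e. $\kappa(G_{(n)}) \leq 6c(G_{(n)}) - h$, which is again the wrong direction). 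The correct route is: the left-hand inequality of (\ref{eq:c-kappa.comparaison}) reads $\kappa(G_{(n)}) + h_n \leq 6\,c(G_{(n)})$, which bounds $\kappa$ from \emph{above}; what I actually want is a lower bound on $\kappa$, so I should instead combine $c(G_{(n)}) \geq T(G_{(n)}) = n\,T(G)$ with a lower bound of $\kappa$ by $c$. Since $c(G) \leq 3(\kappa(G)-4)$ gives $\kappa(G_{(n)}) \geq \tfrac13 c(G_{(n)}) + 4 \geq \tfrac13 n\,T(G) + 4$, dividing by $n$ and letting $n\to\infty$ gives exactly $\kappa_\infty(G) \geq T(G)/3$.

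The main obstacle is bookkeeping about which free-product operations preserve the hypotheses (no $\Z_2$ free factor, zero free index) as $n$ grows, and making sure the inequality directions in Proposition~\ref{prop:c-kappa.comparaison} are used correctly — the right-hand inequality $c(G)\leq 3(\kappa(G)-4)$ is the one that produces a lower bound for $\kappa$, and it applies to every group, so the argument goes through cleanly once $T(G_{(n)}) = n\,T(G)$ and $c \geq T$ are invoked. I would also double-check the edge case where $G'$ is free (so $G = G' \ast \Z_2$ with $T(G)=0$): there the first branch still applies and gives the positive bound $2\log_3 2$, consistent with the statement.
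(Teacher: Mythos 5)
Your proposal is correct and follows essentially the same route as the paper: the upper bound by gluing $n$ copies of a minimal complex along a common $2$-simplex, the $\Z_2$ case via the torsion bound of Proposition~\ref{prop:simplexe2} applied to $|\mbox{Tors}\,H_1(G_{(n)},\Z)|\geq 2^n$, and the remaining case via $T(G_{(n)})=n\,T(G)$ together with $T\leq c\leq 3(\kappa-4)$. The only cosmetic remark is that your bookkeeping about $G_{(n)}$ having no $\Z_2$ free factor is unnecessary, since the chain $T(G)\leq c(G)\leq 3(\kappa(G)-4)$ holds for every group.
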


\begin{proof}
We start with a minimal complex $X$ for $G$. Choose a triangle $\Delta \subset X$ and consider the complex
$$
Y_n = \underset{n}{\underbrace{X \mathop{\cup}\limits_{\Delta} \dots \mathop{\cup}\limits_{\Delta} X}}.
$$
We get
$$
\kappa(G_{(n)}) \leq s_2(Y_n) = n(s_2(X)-1) + 1 =n(\kappa(G) - 1) + 1
$$
which implies the upper bound.
Now if $G = G' \ast \Z_2$ then
$$
|\text{Tors}H_1(G_{(n)}, \Z)| \geq |\text{Tors}H_1((\Z_2)_{(n)},
\Z)| = 2^n.
$$
Using Proposition \ref{prop:simplexe2} we derive
$$
\kappa_{\infty}(G) \geq 2\log_3 2.
$$
If  $G$ does not decompose as $G' \ast \Z_2$, because $T(G) \leq c(G) \leq 3\, (\kappa(G)-4)$ we get the result as $T$-invariant is additive for free products.
\end{proof}

\section{Simplicial complexity and systolic area}\label{sec:complexityvssystolicarea}

This section is completely devoted to the proof of Theorem \ref{th:syscomplexity} given in the introduction.\\

For the left-hand side inequality we consider a minimal simplicial
complex $X$ of dimension $2$ with fundamental group $G$. Endow $X$
with the metric $h$ such that any edge is of length $2\pi \over 3$
and any face is the round hemisphere of radius $1$. Because $X$ is
minimal, $s_2(X) = \kappa(G)$ and so
$$
\mbox{area}(X, h) = 2\pi\kappa(G).
$$
The definition of  the metric $h$ implies that any systolic geodesic can be
homotoped to the $1$-skeleton without increasing its length. Such
a curve passes through at least three edges and thus $\sys(X, h)
\geq 2\pi$. This implies that
$$
\sigma(G) \leq \sigma(X, h) \leq {\kappa(G) \over 2\pi}.
$$

\bigskip

To prove the right-hand side inequality we use several ideas introduced in
\cite[5.3.B]{Gro83}. Set  
\begin{equation}\label{alpha}
\alpha=25\exp\left(\sqrt{\ln (50^2 \cdot \sigma(G))}\right).
\end{equation}
By \cite[Theorem 3.5]{RS08},
$$
\sigma(G) \geq {1 \over 4}
$$
as $G$ has free index zero.
The real number $\alpha$ is thus well defined and satisfies $\alpha>25$. Fix some
positive $\varepsilon<{1 \over 25}$ small enough such that
\begin{equation}\label{epsilon}
\log_5 \frac{1}{25\varepsilon} \cdot \ln \frac{\alpha}{25} \geq
\ln \, (50^2 \cdot \kappa(G)).
\end{equation}

\medskip

According to \cite[Theorem 3.5 and Lemma 4.2]{RS08} there exists a simplicial
complex $P$ of dimension $2$ endowed with a metric $g$ such that
\begin{itemize}
\item $\pi_1(P) = G$ ;
\item $\sys(P,g)=1$ ;
\item $\sigma(P, g)=\mbox{area}(P,g) < \sigma(G) + \varepsilon$ ;
\item any ball $B(p,R) \subset P$ of radius $R \in [\varepsilon,{1 \over 2}]$ centered at any point $p \in P$ satisfies the inequality
$$
|B(p,R)| \geq {1 \over 4} R^2.
$$
Here $|B(p,R)|$ denote the area of the metric ball $B(p,R)$ for the metric $g$. 
\end{itemize}

\medskip

Suppose first that there exists some point $p$ in $P$ such that 
$$
|B(p,5R)| \geq  \alpha |B(p,R)| 
$$
for all $R \in [\varepsilon,{1 \over 25}]$ (the reasons for considering those special values will become clear in the proof of Lemma \ref{lem:nerf} and the final calculation). Then, if $r$ denotes the unique integer such that 
$$
{1\over 5^{r+2}} \leq \varepsilon < {1 \over 5^{r+1}},
$$
we can compute that
$$
|B(p,1/5)| \geq  \alpha^r |B(p,1/5^{r+1})| \geq {1\over 4}
\alpha^r \varepsilon^2 
\geq {1\over 50^2} \left(\frac{\alpha}{25}\right)^r 
\geq {1\over 50^2} \left(\frac{\alpha}{25}\right)^{\log_5{1 \over 25 \varepsilon}}.
$$
Thus
$$
\sigma(P,g)=\mbox{area}(P,g)\geq |B(p,1/5)| \geq\kappa(G)
$$
according to the inequality (\ref{epsilon}) and the result is proved in this case. \\

Now suppose that for all $p$ in $P$ 
$$
|B(p,5R)| <  \alpha |B(p,R)| 
$$
for some $R \in [\varepsilon,{1 \over 25}]$ and let $R_p$ denote the supremum of such radii. In particular
$$
| B(p,5R_p)| \leq \alpha \cdot |B(p,R_p)|,
$$
and  for all $R \in ]R_p,{1\over 25}]$
$$
|B(p,5R)|\geq\alpha \cdot |B(p,R)|.
$$
 Remark that if $R_p<1/25$ then 
$$
| B(p,5R_p)| = \alpha \cdot |B(p,R_p)|.
$$

Such a ball $B(p,R_p)$ is said to be {\it $\alpha$-admissible} according to Gromov's terminology (see \cite[Theorem 5.3.B]{Gro83}) and has area bounded from below as follows.

\begin{lemma}
$$
|B(p,R_p)|\geq A:={1\over 50^2} \left({1 \over 25}\right)^\frac{\ln {50^2 \, (\sigma(G)+\varepsilon)}}{\ln (\alpha / 25)}.
$$
\end{lemma}

\begin{proof}[Proof of the lemma]
Let $r$ be the unique integer such that 
$$
{1 \over 5^{r+2}} < R_p \leq {1 \over 5^{r+1}}.
$$
We have
$$
\sigma(P,g)=\mbox{area}(P,g) \geq |B(p,{1 / 5})| \geq \alpha^r |B(p,R_p)| \geq
{1\over 4} \alpha^r R_p^2 \geq {1\over 50^2} \left({\alpha \over
25}\right)^r,
$$
and so
$$
r\leq \frac{\ln {50^2 \, (\sigma(G)+\varepsilon)}}{\ln {\alpha
\over 25}}.
$$
This implies
$$
|B(p,R_p)|\geq {1\over 4} R_p^2 \geq {1\over 50^2} \left({1 \over
25}\right)^r \geq {1\over 50^2} \left({1 \over 25}\right)^\frac{\ln {50^2 \, (\sigma(G)+\varepsilon)}}{\ln (\alpha / 25)}.
$$
\end{proof}

\medskip

Using this familly of $\alpha$-admissible balls, we now construct a cover of $(P,g)$ whose nerve has $G$ as fundamental group and whose number of $2$-simplices is bounded from above by the systolic area of $(P,g)$. This will lead to the desired upper bound on the simplicial complexity of $G$ in terms of its systolic area. \\

If $B:=B(p,R)$ denotes some metric ball centered at $p$ and of radius $R$, we denote by $nB$ the concentric ball
$B(p,nR)$ for any positive integer $n$.
First choose  an $\alpha$-admissible ball $B_1:=B(p_1,R_1)$ with
$R_1:=\max\{R_p \mid p \in P\}$. At each step $i\geq 2$, we
construct $B_i$ using the data of $\{B_{j}\}_{j<i}$ as follows.
Let $R_i$ be the maximal radius of an  $\alpha$-admissible ball
centered at a point in the complement of the union of the balls
$\{2B_j\}_{j<i}$ and let $B_i:=B(p_i,R_i)$ be such an
$\alpha$-admissible ball. By construction, $B_i$ is  disjoint from
the other balls  $B_{j}$ as  $R_i \leq R_j$. The process ends in a
finite $N$ number of steps when the balls
$\{2B_i\}_{i=1}^N$ cover $P$, as $R_i\geq \varepsilon$ for every $i=1,\ldots,N$. \\

Consider the corresponding nerve ${\mathcal N}$ of this cover. In
general, if $X$ is a  paracompact topological space and $\U$ a
locally finite cover of $X$, there exists a canonical map $\Phi$
from $X$ to the nerve ${\mathcal N}(\U)$ of the cover $\U$ defined
as follows. If $\{\phi_V\}_{V \in \U}$ denotes a partition of
unity associated with $\U$,
\begin{eqnarray*}
\Phi : X & \to & {\mathcal N}(\U)\\
x & \mapsto & \mathop{\sum}\limits_{V \in \U}\phi_V(x)V.\\
\end{eqnarray*}
This map is uniquely defined up to homotopy. In our case,
$\U=\{2B_i \}_{i=1}^N$ and $\Phi$ associates the center of such a
ball to the corresponding vertex of ${\mathcal N}$.

\begin{lemma} \label{lem:nerf}
The map $\Phi : P \to {\mathcal N}$ induces an isomorphism of
fundamental groups.
\end{lemma}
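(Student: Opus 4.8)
The plan is to show that $\Phi\colon P \to \mathcal{N}$ is a $\pi_1$-isomorphism by realizing it as part of a standard "nerve map is a weak equivalence" argument, tailored to our low-dimensional situation. The two ingredients needed are: (i) the cover $\{2B_i\}$ is such that each $2B_i$ is simply connected (indeed contractible), and (ii) the intersections $2B_i \cap 2B_j$, whenever nonempty, are also connected (so that the nerve faithfully records the combinatorics of the cover at the level of $\pi_1$). If both hold, then the nerve theorem in the form that compares $\pi_1$ (using van Kampen inductively over the pieces of the cover) gives that $\Phi_*$ is an isomorphism. So the proof reduces to establishing (i) and (ii) from the metric properties of $(P,g)$ recorded above.

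For (i): each ball $2B_i = B(p_i, 2R_i)$ has radius $2R_i \le 2R_0 = 2/25 < 1/2 = \sys(P,g)/2$. A metric ball of radius strictly less than half the systole in a length space is contractible — any loop inside it has length less than the systole, hence is contractible in $P$, and moreover one can contract it inside the ball by a radial-type homotopy (coning off along geodesics to the center, which lie inside $2B_i$ by convexity of the radius bound versus the systole; more carefully, one uses that balls of radius $<\sys/2$ deformation retract to their center in a length space, a standard fact). This also handles simple-connectedness of the pieces. For (ii): if $2B_i \cap 2B_j \neq \emptyset$, then this intersection is contained in a ball of radius at most $2R_i + 2R_j + 2R_i \le$ (some explicit bound still $< 1/2$, using $R_i, R_j \le R_0 = 1/25$, so that $2R_i + 4R_j \le 6/25 < 1/2$)—hence it too lies in a ball of radius $<\sys/2$ and the same contractibility argument shows it is connected (in fact contractible). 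This is exactly the point where the numerical choice $R_0 = 1/25$ is used: it leaves enough room so that unions of two (or a controlled number of) such balls still have diameter below the systole.

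With (i) and (ii) in hand, I would invoke the nerve lemma: since $P = \bigcup_i 2B_i$ with each piece and each pairwise intersection connected and simply connected, the canonical map $\Phi\colon P \to \mathcal{N}$ induces an isomorphism on $\pi_1$ (one proves this by induction on $N$ using the Seifert–van Kampen theorem, at each stage gluing in $2B_i$ along the — connected — intersection with the union of the previous balls, matching the corresponding operation of adding a vertex and its incident simplices to the nerve). Alternatively one may cite the standard statement that if all nonempty finite intersections of a good cover are contractible then the nerve is homotopy equivalent to the space; here we only need the weaker $\pi_1$-statement, for which connectedness of the pieces and their pairwise (and triple, if one wants the full nerve theorem) intersections suffices.

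The main obstacle is verifying (ii) with honest constants, i.e., checking that whenever $2B_i \cap 2B_j \neq \emptyset$ the union $2B_i \cup 2B_j$ — and more generally any intersection appearing in the nerve — is contained in a metric ball of radius $<\sys(P,g)/2 = 1/2$, so that the "small balls are contractible" principle applies uniformly. This is where one must be slightly careful: a priori one would also want triple intersections connected for the full nerve theorem, but for the $\pi_1$-statement the van Kampen induction only requires connectedness of $2B_i$ and of $2B_i \cap (\text{union of earlier balls})$, and the latter, while a union of the sets $2B_i \cap 2B_j$, is still contained in a ball of controlled (small) radius around $p_i$, hence connected. Once the radius bookkeeping is done — and it works precisely because $R_0$ was chosen as small as $1/25$ relative to the unit systole — the rest is the formal nerve-map argument.
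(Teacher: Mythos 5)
Your argument hinges on two claims: (i) each ball $2B_i$ is simply connected, indeed contractible via a deformation retraction to its center, and (ii) the pairwise intersections $2B_i\cap 2B_j$ (and the intersections of $2B_i$ with unions of earlier balls, needed for the van Kampen induction) are connected. Neither follows from the hypotheses, and both are false in general. Here $(P,g)$ is a piecewise smooth metric on a singular $2$-complex: a metric ball of radius less than half the systole is a union of minimizing geodesics from its center, but since such geodesics are neither unique nor continuous in their endpoints, the ball need not retract to its center and need not be simply connected \emph{as a subspace}. The systole bound only yields that every loop lying in such a ball is null-homotopic \emph{in $P$} (cone it off to the center by short geodesics; the resulting small loops have length $<\sys(P,g)$), not inside the ball, and contractibility of the pieces within themselves is exactly what the nerve lemma requires. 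Likewise two metric balls can meet in several components --- there is no systolic obstruction to this --- and your justification for (ii), that the intersection sits inside a ball of radius $<1/2$, is a non sequitur: a subset of a small ball need not be connected. The same issue breaks the step where you need $2B_i\cap\bigl(\bigcup_{j<i}2B_j\bigr)$ path-connected. So the good-cover hypotheses are simply not available and the nerve theorem (even its $\pi_1$-version) cannot be invoked.

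The paper's proof avoids any local topological control of the balls. It builds an explicit map $\Psi:{\mathcal N}^{(2)}\to P$ sending each vertex $v_i$ to the center $p_i$, each edge $[v_i,v_j]$ to a minimizing geodesic $\gamma_{i,j}$ of length at most $4R_0$, and each $2$-simplex to a contraction of the loop $\gamma_{i,j}\star\gamma_{j,k}\star\gamma_{k,i}$, which is contractible because its length is at most $12R_0<1=\sys(P,g)$. It then checks that $\Phi\circ\Psi$ is homotopic to the identity on ${\mathcal N}^{(1)}$ using the star property of $\Phi$, so $\Phi_\sharp\circ\Psi_\sharp=\mathrm{id}$, and proves surjectivity of $\Psi_\sharp$ by covering a shortest geodesic loop by balls of the family and replacing it, arc by arc, by the corresponding edge path, again using only that closed curves of length at most $24R_0<1$ are contractible. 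The only geometric input is ``loops of length less than the systole are contractible in $P$''; this is the statement that survives in the singular setting, which is why the argument is organized around explicit short loops and their lengths rather than around the topology of the balls. To repair your proposal you would have to either prove the contractibility and connectedness statements (which fail) or switch to this kind of explicit homotopy-inverse construction.
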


\begin{proof}[Proof of the lemma] Denote by  ${\mathcal N}^{(k)}$
the $k$-skeleton of ${\mathcal N}$. We will construct a map
$\Psi : {\mathcal N}^{(2)} \to X$ such that the induced map
$$
\Psi_\sharp : \pi_1({\mathcal N})\simeq \pi_1({\mathcal N}^{(2)})
\to \pi_1(P)
$$
is the inverse of $\Phi_\sharp : \pi_1(P) \to \pi_1({\mathcal
N})$.

\medskip

Let define $R_0:=1/25$ and recall that for all $i=1,\ldots,N$ we have $R_i \leq R_0$.
We have denoted the set of centers of the balls of the covering
$\U$ by $\{p_i\}_{i=1}^N$ and set $v_i=\Phi(p_i)$. We first define
$\Psi$ on ${\mathcal N}^{(0)}$ by
$$
\Psi(v_i) =p_i.
$$

If two vertices $v_i$ and $v_j$ are connected by an edge $[v_i,
v_j]$, we join $p_i$ and $p_j$ in $P$ by any minimizing geodesic
denoted by $\gamma_{i,j}$. The map $\Psi$ is then defined on the
edge $[v_i,v_j]$ to the arc $\gamma_{i,j}$ in the obvious way
$$
\Psi : [v_i,v_j] \longrightarrow \gamma_{i,j}.
$$
This defines  $\Psi$ on the $1$-skeleton ${\mathcal N}^{(1)}$.
Remark that the curve $\gamma_{i,j}$ has length less than ${4 \cdot R_0}$ ($v_i$ and $v_j$
are connected by an edge if and only if $2B_i \cap 2B_j\neq
\emptyset$).

Next we consider a $2$-simplex $\tau=[v_i, v_j, v_k]$ of
${\mathcal N}$. The concatenation $\gamma_{i,j} \star
\gamma_{j,k}\star \gamma_{k,i}$ is a closed curve of $P$ of length
less than $12 \cdot R_0<1$. So it is contractible and any
contraction of this curve into a point gives rise to an extension
of the map $\Psi$ to $\tau$. We get this way a map
$$
\Psi : {\mathcal N}^{(2)} \to P.
$$
Observe that the restriction of $\Psi$ to ${\mathcal N}^{(1)}$ is
unique up to homotopy.

\medskip

By construction, $\Phi(p_i) = v_i$ for any $i=1,\ldots,N$, and if
$[v_i,v_j]$ denotes an edge of ${\mathcal N}$ and $p$ belongs to
the corresponding geodesic $\gamma_{ij}$, $\Phi(p) \in
\mbox{St}([v_i, v_j])$ where $\mbox{St}([v_i, v_j])$  denotes the
star of $[v_i, v_j]$. This implies that $\Phi \circ \Psi: [v_i,
v_j] \longrightarrow {\mathcal N}$ is homotopically equivalent to
the identity relatively to $\{v_i, v_j\}$. So $\Phi \circ \Psi:
{\mathcal N}^{(1)} \longrightarrow {\mathcal N}$ is homotopically
equivalent to the identity relatively to ${\mathcal N}^{(0)}$.
From this, we get that the induced morphism $\Phi_\sharp \circ
\Psi_\sharp : \pi_1({\mathcal N})\to \pi_1({\mathcal N})$ is the
identity and so $\Psi_\sharp: \pi_1({\mathcal N})\to \pi_1(P)$ is
injective.

\medskip

It remains to prove that $\Psi_\sharp$ is onto. Consider a
geodesic loop $\delta$ based at the center $p_1$ of the ball $B_1$
and whose length is minimal in its own homotopy class. We complete
$p_1$ into a finite family $\{p_{i_j}\}_{j \in \Z_n}$ of points of
$P$ such that
\begin{itemize}
\item each $p_{i_j}$ is the center of some ball $B_{i_j}$ of $\U$ ;
\item the family $\{2B_{i_j}\}_{j \in \Z_n}$ covers $\delta$ ;
\item $2B_{i_j} \cap 2B_{{i_{j+1}}}\neq \emptyset$.
\end{itemize}
For each $j \in \Z_n$, fix any point $x_j \in 2B_{i_j} \cap
\delta$ and denote by $\delta_j $ the part of the loop $\delta$
joining $x_j$ and $x_{j+1}$ and contained in $2B_{i_j} \cup
2B_{{i_{j+1}}}$. By construction, the curve $\delta_j$ has length less than $8 \cdot
R_0$. Fix a minimizing geodesic $\beta_j$ joining $p_{i_j}$ and
$x_j$. The concatenation
$$
\gamma_{i_j,i_{j+1}}\star \beta_{j+1} \star (\delta_j)^{-1}  \star
(\beta_j)^{-1}
$$
is a closed curve of length less than $24 \cdot R_0<1$ and thus
contractible. So $\delta$ is homotopic to  $\gamma_{1,2}\star
\gamma_{2,3}\star ... \star\gamma_{n,1}$ with based point $p_1$
fixed. This proves the surjectivity of $\Psi_\sharp$ and completes
the proof.
\end{proof}

\medskip

So $\pi_1({\mathcal N}) \simeq G$, and we deduce the lower bound
$$
s_2({\mathcal N}) \geq \kappa(G).
$$
We now estimate the number $s_2({\mathcal N})$ by the systolic area of $(P,g)$.\\
 
Recall that $B_1,\ldots,B_N$ denote pairwise disjoint $\alpha$-admissible balls of radii $R_1,\ldots,R_N$ satisfying $R_i\geq R_{i+1}$. For $i=1,\ldots,N-1$ denote by $I_i$ the set of $j>i$ such that $2B_i\cap 2B_j \neq \emptyset$, and observe that if $j \in I_i$ then $B_j \subset 5B_i$. Then

\begin{eqnarray*}
\sigma(P,g)=\text{area}(P,g)&\geq & \sum_{i=1}^N |B_i|\geq \alpha^{-1} \sum_{i=1}^N |5B_i|\\
&\geq &\alpha^{-1} \sum_{i=1}^N \, \, \sum_{j\in I_i} |B_j|\\
&\geq &\alpha^{-2} \sum_{i=1}^N \, \, \sum_{j\in I_i} |5B_j|\\
&\geq &\alpha^{-2} \sum_{i=1}^N \, \, \sum_{j\in I_i} \, \, \sum_{k\in I_j} |B_k|\\
&\geq &\alpha^{-2} \sum_{i=1}^N \, \, \sum_{j\in I_i} \, \, \sum_{k\in I_j} A.\\
\end{eqnarray*}

But the set of $i<j<k$ such that $2B_i\cap 2B_j\neq \emptyset$ and $2B_j\cap 2B_k\neq \emptyset$ includes the set of $i<j<k$ such that $2B_i\cap 2B_j\cap 2B_k \neq \emptyset$, thus its cardinal is at least equal to $s_2(\mathcal{N})$, and we deduce

$$
s_2(\mathcal{N})\leq \alpha^2 \cdot {\sigma(P,g) \over A}.
$$
So
$$
\kappa(G) \leq \alpha^2Ê\cdot \frac{ (\sigma(G)+\varepsilon)}{{1\over 50^2} \left({1 \over 25}\right)^\frac{\ln {50^2 \, (\sigma(G)+\varepsilon)}}{\ln (\alpha / 25)}}\leq \alpha^2Ê\cdot {50^2} \cdot 25^\frac{\ln {50^2 \, (\sigma(G)+\varepsilon)}}{\ln (\alpha / 25)}\cdot (\sigma(G)+\varepsilon).
$$
From the equality
$$
\ln {\alpha \over 25}=\sqrt{\ln (50^2 \cdot \sigma(G))},
$$
we then compute that
$$
\kappa(G) \leq 25^2 \cdot  e^{2\sqrt{\ln (50^2 \cdot \sigma(G))}} \cdot  e^{\ln 25 \cdot \frac{\ln {50^2 \, (\sigma(G)+\varepsilon)}}{\sqrt{\ln (50^2 \cdot \sigma(G))}}}\cdot  50^2(\sigma(G)+\varepsilon).
$$
This finally implies the result by letting $\varepsilon \to 0$.

\section{Finitude results for simplicial complexity}\label{sec:finitude}

We focus in this section on finitude problems for the invariant $\kappa$: how to estimate the (obviously finite) number of isomorphism classes of groups whose simplicial complexity is at most $T$ ? Recall that given a positive integer $T$ the set $\G_\kappa(T)$ is defined as the isomorphism classes of groups $G$ with free index zero such that $\kappa(G)\leq T$ while $\mathcal{A}_{\kappa}(T)$ denotes the subset corresponding to finite abelian groups.

\subsection{Upper bound for $|\mathcal{G}_{\kappa}(T)|$}
We start with the proof of the upper bound contained in Theorem \ref{th:borne.sup}, namely
$$
|\mathcal{G}_{\kappa}(T)| \leq 2^{6T\log_2T}.
$$
Start with any simplicial complex $X$ of dimension $2$ minimal for $G$. Because $G$ is of free index zero and the simplicial complex $X$ is minimal recall that

($M_1$) : any edge of $X$ is adjacent to at least two $2$-simplices,

($M_2$) : any vertex of $X$ is adjacent at least to four $2$-simplices.

\noindent We derive   from ($M_1$) and  from ($M_2$) the following  upper bounds on the number of $0$- and $1$-simplices:
$$
s_0(X) \leq \frac{3T}{4} \, \, \text{and} \, \, s_1(X) \leq \frac{3T}{2}.
$$

Consider the barycentric subdivision $\text{sd}(X)$ of $X$ and color the vertices as follows.
\begin{quote}
        \begin{itemize}
        \item The original vertices of $X$ are colored in black,
        \item barycenters of edges of $X$ are colored in green,
        \item barycenters of faces of $X$ are colored in red.
        \end{itemize}
\end{quote}

Consider the $1$-skeleton of $\text{sd}(X)$ and erase the edges joining red and black vertices (see Figure \ref{fig-subdivision}).
We denote by $b$, $g$ and $r$ the number of respectively black, green and red vertices.
The $1$-dimensional simplical complex thus obtained is a $3$-colored graph which satisfies the following properties:

\begin{figure}[h]
\begin{center}
\includegraphics[width=0.4\linewidth]{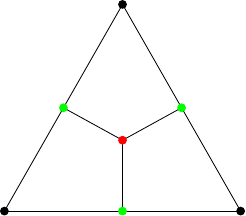}
\caption{The $3$-colored graph associated to a face.} \label{fig-subdivision}
\end{center}
\end{figure}

\smallskip

$(P_1)$ : $b  \leq {3T \over 4}$, $g  \leq {3T \over 2}$ and  $r \leq T$,

\smallskip

$(P_2)$ : any green vertex is adjacent to exactly two black vertices,

\smallskip

$(P_3)$ : any red vertex is adjacent to exactly three green vertices,

\smallskip

$(P_4)$ : no pair of red and black vertices are adjacent.

\smallskip

It is straightforward to check that to any minimal complex of a group with free index zero and simplicial complexity at most $T$ corresponds a unique $3$-colored graph satisfying properties $(P_1)$ to $(P_4)$. Observe that there exist $3$-colored graphs satisfying properties $(P_1)$ to $(P_4)$ which does not correspond to any minimal complex, and even does not correspond to any simplicial complex.

The number of $3$-colored graphs satisfying properties $(P_1)$ to $(P_4)$ can be estimated using their incidence matrix which has the following form
$$
\begin{pmatrix} 0 & A^t & 0 \\
A & 0 & B^t \\
0 & B & 0
\end{pmatrix}
$$
where $A$ is a $g \times b$ matrix with each row containing exactly two non zero coefficients equal to $1$, and $B$ is a $r \times g$ matrix for which each row contains exactly three non zero coefficient equal to $1$. Thus the number of such matrices $A$ is at most
$$
\left({b\, (b-1)\over 2}\right)^g
$$
and the number of such matrices $B$ is at most
$$
\left({g\,(g-1)\,(g-2)\over 6}\right)^r.
$$

From this we compute that the number of $3$-colored graphs satisfying properties $(P_1)$ to $(P_4)$ is at most
$$
{9T^3 \over 8} \cdot \left({1 \over 2} \cdot {3T \over 4} \cdot\left({3T \over 4} -
1\right)\right)^{3T \over 2} \cdot \left({1\over 6} \cdot {3T \over
2}\cdot \left({3T \over 2} - 1\right) \cdot \left({3T \over 2} - 2\right)\right)^T
$$
which is less than  $T^{6T}$ for  $T\geq 2$.
This concludes the proof.

\subsection{Simplicial complexity for finite abelian groups.}

In this subsection we shall see that the subset   $\mathcal{A}_{\kappa}(T)$ of {\it finite abelian} groups with simplicial complexity bounded by $T$ is already large.

\smallskip

Recall that a finite abelian group $G$ decomposes in a direct sum
$$
G = \Z_{n_1} \oplus \Z_{n_2} \oplus \ldots \oplus \Z_{n_s}
$$
where $n_1|n_2|\dots |n_s$. These numbers are called the {\it invariant factors} of $G$ and are uniquely defined by the group.
This decomposition can be used to estimate the simplicial complexity of $G$ as follows.

\begin{theorem}\label{teo:cyclique}
Any finite abelian group $G$ satisfies the double inequality
$$
2\log_3|G| \leq \kappa(G) \leq 14\log_2|G| + 7s^2 - 4s,
$$
where $s$ is the number of invariant factors of $G$.
\end{theorem}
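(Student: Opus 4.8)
The lower bound is immediate from Proposition~\ref{prop:simplexe2}: for a finite abelian group, $\mathrm{Tors}\,H_1(G,\Z) = H_1(G,\Z) = G$, so $\kappa(G) \geq 2\log_3|G|$. The content is therefore entirely in the upper bound, which I would prove by an explicit construction of a $2$-complex with fundamental group $G = \Z_{n_1}\oplus\cdots\oplus\Z_{n_s}$ together with a careful count of its $2$-simplices.

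The natural starting point is to build the complex one invariant factor at a time. For a single cyclic factor $\Z_n$, I would use (a variant of) the presentation $\langle a \mid a^n\rangle$, triangulating the resulting space along the lines of the construction in the proof of Proposition~\ref{prop:c-kappa.comparaison}; by choosing generators so that $n = 2^{k_1}\cdot 3^{k_2}\cdots$ is handled via repeated doubling/tripling rather than a single long relator (i.e.\ writing a presentation with relators of bounded length whose total length is $O(\log n)$), one gets a complex for $\Z_n$ with $O(\log_2 n)$ triangles. Taking disjoint wedge-like unions over the $s$ cyclic factors — glued along a common $2$-simplex as in property~\textbf{2} of simplicial complexity, which keeps the count subadditive — would already give roughly $14\sum_i \log_2 n_i = 14\log_2|G|$ triangles. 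The extra term $7s^2 - 4s$ must then come from the commutation relations $[a_i,a_j]$ that are needed to make the free product into the direct sum: there are $\binom{s}{2}$ of them, each a relator of length $4$, and attaching and triangulating each such commutator square costs a bounded number of triangles. The coefficient $7$ strongly suggests a $2$-torus–type gadget (recall $\kappa(\Z\oplus\Z)=14$, and $14 = 7\cdot 2$, with the induction bound $\kappa(A_n)\leq 7n(n-1)$ from Example~\ref{ex:surfaces} giving exactly $7\binom{n}{2}\cdot 2 = 7n(n-1)$); so I would reuse the minimal torus triangulation of Figure~\ref{fig-trigtore}, or rather the inductive gluing scheme behind the bound $\kappa(A_n)\leq 7n(n-1)$, to install all the commutators, contributing the $7s^2$; the linear correction $-4s$ should be bookkeeping from shared vertices/edges along the common gluing simplex.

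Concretely, the key steps in order would be: (1) record the $\Z_n$ estimate $\kappa(\Z_n)\leq 14\log_2 n + (\text{const})$ via a logarithmic-length triangular presentation and the triangulation recipe of Proposition~\ref{prop:c-kappa.comparaison}; (2) combine the $s$ cyclic factors into $G$ by superposing the generator loops on a single wedge and attaching, in addition to the $s$ power-relators, the $\binom{s}{2}$ commutators, modelling each commutator on the torus gadget; (3) count: $2$-simplices from the power-relators sum to $\leq 14\log_2|G|$ (plus a negligible per-factor constant absorbed by gluing along a shared triangle), and $2$-simplices from the commutators sum to $7s^2-4s$; (4) check that the resulting complex is genuinely simplicial (subdividing if a relator is too short, which is where the additive constants hide) and has the right fundamental group.

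The main obstacle I anticipate is the simultaneous bookkeeping in step~(3): keeping the logarithmic-length presentation of each $\Z_{n_i}$ triangular and of total length $\leq \tfrac{14}{6}\log_2 n_i$ while \emph{also} sharing enough cells between the different factors and between the factors and the commutator gadgets so that the constants collapse exactly into the clean expression $14\log_2|G| + 7s^2 - 4s$ rather than something with stray additive $O(s)$ or $O(\log s)$ terms. Getting the coefficient of $\log_2|G|$ down to $14$ (rather than, say, the $18 = 6\cdot 3$ one would naively read off from $\kappa\leq 6c$ applied to $c(\Z_n)\approx 3\log n$) requires the more efficient direct triangulation of powers — essentially the even/odd block triangulations of Figures~\ref{fig-trigB2_pair} and~\ref{fig-trigB2_impair} applied to a doubling chain — so verifying that the blocks fit together correctly around a logarithmically long relator is the delicate computational heart of the argument. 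The commutator count reusing the $7n(n-1)$ bound of Example~\ref{ex:surfaces} should be comparatively routine once that machinery is in place.
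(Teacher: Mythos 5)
Your overall architecture matches the paper's: lower bound straight from Proposition~\ref{prop:simplexe2}, upper bound by building one complex per cyclic factor with $O(\log n_k)$ triangles, wedging them, and installing the $\binom{s}{2}$ commutation relations by gluing a copy of the minimal $14$-triangle torus of Figure~\ref{fig-trigtore} along each pair of generator loops; that last part, and the resulting $7s(s-1)$ contribution, is exactly what the paper does. The gap is in the step you yourself flag as the ``delicate computational heart'': the bound $\kappa(\Z_m)\leq 14\log_2 m+3$ for a single cyclic factor. Your plan --- a triangular presentation built from a doubling chain $a_{i+1}=a_i^2$ plus one digit-collecting relator, triangulated by the recipe of Proposition~\ref{prop:c-kappa.comparaison} (the block triangulations of Figures~\ref{fig-trigB2_pair}--\ref{fig-trigB2_impair}) --- cannot reach the coefficient $14$: each length-$3$ doubling relator costs $6\cdot 3-1=17$ triangles in that recipe, so you get roughly $17\log_2 m$ (or $18\log_2 m$ via $\kappa\leq 6c$), not $14\log_2 m$. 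The paper's key idea, which is missing from your proposal, is a purely topological doubling gadget: a Moebius strip triangulated with only $9$ triangles (the minimal $10$-triangle $\R P^2$ of Figure~\ref{fig-trigRP2} with one face removed), whose boundary represents twice its core. Stacking these into a ``Moebius telescope'' costs $9$ triangles per doubling, and a single disk glued along the simplicial loop $\xi(m)$ encoding the dyadic digits of $m$ costs about $5$ triangles per digit, giving $9n+5s+3\leq 14\log_2 m+3$. Without this (or an equally efficient gadget) the stated upper bound is not established; with a presentation-based triangulation you would only prove the theorem with a larger coefficient in front of $\log_2|G|$.

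A secondary, minor point: your accounting of the term $7s^2-4s$ is off. In the paper the tori contribute $14\binom{s}{2}=7s^2-7s$ and the $+3s$ comes from the additive constant $+3$ in $\kappa(\Z_{n_k})\leq 14\log_2 n_k+3$ for each of the $s$ wedge summands (the factors are wedged at a point, so these constants simply add); the $-4s$ is $-7s+3s$, not a saving from sharing a common $2$-simplex. This does not affect the structure of the argument, but your hope that gluing along a shared simplex makes the per-factor constants ``collapse'' is not how the clean expression arises.
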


\begin{remark}
In particular
$$
\kappa(G) \leq 7(\log_2|G|)^2 + 10\log_2|G|
$$
for any finite abelian group. The order of this upper bound is asymptotically sharp as shown by the following example. If
$$
G = \mathop{\oplus}\limits_{i=1}^s (\Z_2)_i,
$$
we see that $|G| = 2^s$ and $H_2(G,\Z_2)$ is a $\Z_2$-vector space of dimension $s(s+1) \over 2$. So
$$
\kappa(G) \geq {s(s+1) \over 2} \geq {1 \over 2} (\log_2 |G|)^2.
$$
\end{remark}

\bigskip

From Theorem \ref{teo:cyclique} we directly derive that for any $m\geq 2$
$$
2 \log_3 m \leq \kappa(\Z_m) \leq 14\log_2 m+3.
$$
In particular we get the lower bound announced in Theorem \ref{th:borne.sup}:

\begin{corollary}\label{cor:number.inf}
For any positive $T$
$$
|\mathcal{A}_{\kappa}(T)| \geq \left[2^{{T - 3} \over 14}\right]
$$
where $[x]$ denotes the integral part of the number $x$.
\end{corollary}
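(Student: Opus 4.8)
The plan is to deduce Corollary \ref{cor:number.inf} directly from the upper bound for $\kappa(\Z_m)$ established just above it. First I would observe that the estimate $\kappa(\Z_m)\leq 14\log_2 m + 3$ tells us exactly which cyclic groups land in $\mathcal{A}_\kappa(T)$: whenever $14\log_2 m + 3\leq T$, i.e. whenever $m\leq 2^{(T-3)/14}$, the group $\Z_m$ has simplicial complexity at most $T$ and hence its isomorphism class belongs to $\mathcal{A}_\kappa(T)$. So the integers $m$ with $2\leq m\leq 2^{(T-3)/14}$ each contribute a distinct isomorphism class of finite abelian group (distinct because $\Z_m\cong\Z_{m'}$ forces $m=m'$) with free index zero (a nontrivial finite group has no free factor $\Z$).

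The only remaining point is to count these $m$. The number of integers $m$ with $2\leq m\leq 2^{(T-3)/14}$ is $\big\lfloor 2^{(T-3)/14}\big\rfloor - 1$, which is slightly weaker than the stated $\big[2^{(T-3)/14}\big]$; to get the cleaner bound one simply includes $m=1$, i.e. the trivial group, whose simplicial complexity is $0\leq T$ and which also has zero free index, so that $\mathcal{A}_\kappa(T)$ contains at least the $\big\lfloor 2^{(T-3)/14}\big\rfloor$ classes of $\Z_m$ for $1\leq m\leq 2^{(T-3)/14}$. (Alternatively one absorbs the off-by-one into the integer-part notation; in any case this is a purely cosmetic accounting issue.) This yields $|\mathcal{A}_\kappa(T)|\geq \big[2^{(T-3)/14}\big]$.

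There is essentially no obstacle here: the corollary is a one-line consequence of Theorem \ref{teo:cyclique} specialized to cyclic groups, and the whole content has already been done in proving that theorem (which itself rests on Proposition \ref{prop:simplexe2} for the lower bound and an explicit construction for the upper bound). The only thing to be slightly careful about is the boundary bookkeeping in the integer part, and the trivial remark that pairwise non-isomorphic $\Z_m$'s give genuinely distinct elements of the counting set $\mathcal{A}_\kappa(T)$.
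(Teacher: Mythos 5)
Your argument is exactly the paper's: the corollary is read off directly from the bound $\kappa(\Z_m)\leq 14\log_2 m+3$ (Lemma \ref{lemme:1}, via Theorem \ref{teo:cyclique}), since all cyclic groups $\Z_m$ with $m\leq 2^{(T-3)/14}$ are pairwise non-isomorphic members of $\mathcal{A}_\kappa(T)$. Your handling of the off-by-one in the integer part is a harmless bookkeeping point that the paper glosses over as well, so the proposal is correct and matches the intended proof.
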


The rest of the subsection is devoted to the proof of Theorem \ref{teo:cyclique}.

\medskip

\begin{proof}[Proof of Theorem  \ref{teo:cyclique}]
From Proposition \ref{prop:simplexe2}
$$
\kappa(G) \geq 2\log_3|\text{tors}H_1(G, \Z)|
$$
which gives the left-hand side inequality of Theorem \ref{teo:cyclique} as $|\text{tors}H_1(G, \Z)|=|G|$ for a finite abelian group.

\medskip

The proof of the right-hand side inequality of Theorem \ref{teo:cyclique} relies on the following estimate for the simplicial complexity of $\Z_m$:

\begin{lemma}\label{lemme:1}
$$
\kappa(\Z_m) \leq 14 \log_2m + 3.
$$
\end{lemma}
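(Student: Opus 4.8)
The strategy is to build an explicit small $2$-complex with fundamental group $\Z_m$ by exploiting the binary expansion of $m$, which is what forces a $\log_2 m$ (rather than $m$) bound. The naive presentation $\langle a \mid a^m\rangle$ gives a relator of length $m$, hence by Proposition \ref{prop:c-kappa.comparaison}-type triangulation a complex of size linear in $m$ — far too big. Instead I would use an auxiliary generator trick: introduce generators $x_0, x_1, \dots, x_k$ (with $k = \lfloor \log_2 m\rfloor$) together with the ``doubling'' relations $x_{i+1} = x_i^2$ for $i = 0,\dots,k-1$, so that $x_i$ represents $a^{2^i}$, and one final relation expressing $a^m = 1$ as a product of those $x_i$ for which the $i$-th binary digit of $m$ is $1$. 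Each doubling relation $x_{i+1}x_i^{-2}$ has length $3$, and the final relation has length at most $k+1 \leq \log_2 m + 1$. This is a presentation of $\Z_m$ of total relator length roughly $3\log_2 m + (\log_2 m) = 4\log_2 m + O(1)$, with about $\log_2 m$ generators and $\log_2 m$ relations.

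Next I would convert this presentation into a simplicial $2$-complex and count $2$-simplices carefully, aiming for the constant $14$. The construction is the one used in the proof of Proposition \ref{prop:c-kappa.comparaison}: take a wedge of circles (one per generator), subdivide each circle into three edges, and glue in a triangulated disk for each relation. A relation of length $\ell$ contributes roughly $6\ell$ triangles by the figures cited there, but here the relations are short and structured, so one should triangulate them more economically — in particular the doubling relations $x_{i+1} = x_i^2$ are relators of length $3$ and a single such disk needs only a bounded number of triangles, and one can share edges between consecutive disks since they involve the common generator $x_i$. The bookkeeping: $k \approx \log_2 m$ doubling disks at constant cost each, plus one disk of size $O(\log_2 m)$ for the terminal relation, must be assembled so that the grand total is at most $14\log_2 m + 3$. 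I would choose the triangulations (and the precise way of reading off $m$ in binary, e.g. using $m = \sum \epsilon_i 2^i$ vs.\ a Horner-type nesting) so as to minimize the leading coefficient, checking the base/boundary cases $m$ a power of $2$ separately to nail the additive constant $+3$.

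The main obstacle is the constant-chasing: getting a valid simplicial (not merely CW) complex — no multiple edges between two vertices, no two faces sharing all three vertices — while keeping the coefficient down to $14$. The cell complex from the presentation is only a CW-complex, and naive subdivision to make it simplicial inflates the count; the art is in reusing the subdivided circles and the shared-edge structure of the doubling relations so that the simplicial overhead stays within budget. A secondary subtlety is verifying that the presentation with the auxiliary generators $x_i$ genuinely presents $\Z_m$ and not a quotient or extension — this is a routine Tietze-transformation check (successively eliminate $x_1,\dots,x_k$ using the doubling relations), but it must be done to justify $\pi_1$ of the complex is $\Z_m$. Once the complex is in hand with $s_2 \leq 14\log_2 m + 3$, the definition of $\kappa$ gives the lemma immediately, and it then feeds into the induction on the number of invariant factors that proves the right-hand inequality of Theorem \ref{teo:cyclique}.
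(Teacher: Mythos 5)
Your underlying idea is the right one and is in fact the same mechanism the paper uses: the doubling relations $x_{i+1}=x_i^2$ are exactly what the paper realizes topologically, since for a Moebius strip $\mathcal M$ one has $\{\partial\mathcal M\}=2\{\gamma\}$ in $\pi_1(\mathcal M)$, and the paper's ``Moebius telescope'' of height $n$ is precisely a chain of such doublings, with a single disk glued along a loop $\xi(m)$ encoding the dyadic expansion $m=2^{n_1}+\dots+2^{n_s}$. The Tietze-type verification that this presents $\Z_m$ is indeed routine, and an $O(\log m)$ bound would follow from your plan. But there is a genuine gap exactly where you flag ``the main obstacle'': the stated constants $14$ and $+3$ are not reached by your construction, and you give no concrete mechanism for reaching them. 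If you convert your presentation into a simplicial complex by the generic recipe of Proposition \ref{prop:c-kappa.comparaison} (circles subdivided into three edges, a triangulated disk per relator costing about $6|r_j|$ triangles), each length-$3$ doubling relator costs about $17$ triangles and the final relator about $6s$, giving a coefficient near $23$, not $14$; the hoped-for ``sharing of edges between consecutive disks'' is precisely the nontrivial step and is left as an aspiration.

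What the paper actually does to get $14\log_2 m+3$ is quantitative and specific: each doubling step is realized not as a presentation-complex disk but as an honest Moebius strip (the mapping cylinder of the degree-$2$ map $S^1\to S^1$), triangulated with only $9$ triangles by deleting one face from the minimal $10$-triangle triangulation of $\R P^2$, chosen so that a core loop $\gamma$ and the boundary are simplicial loops of $3$ edges each; the telescope of height $n$ then costs $9n$ triangles, the loop $\xi(m)$ has $3s$ edges, and the single glued disk is triangulated with at most $5s+3$ triangles, giving $s_2(X_m)\le 9n+5s+3\le 14\log_2 m+3$. Without this (or some equally economical explicit triangulation of each doubling step together with a $\sim 5s$ disk), your argument proves only $\kappa(\Z_m)\le C\log_2 m+O(1)$ for a larger constant $C$, which is weaker than the lemma as stated and would degrade the explicit constants propagated into Theorem \ref{teo:cyclique}, Corollary \ref{cor:number.inf} and Theorem \ref{th:borne.sup}.
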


\begin{proof}[Proof of the Lemma]
Start with a Moebius strip denoted by $\mathcal M$ and fix (see Figure \ref{fig-Moebius0})
\begin{itemize}
\item a point $P$ of its boundary $\partial \mathcal M$,
\item a simple loop $\gamma$ based at $P$ such that $\gamma \setminus \{P\}$ lies in the interior of $\mathcal M$.
\end{itemize}
In particular
$$
\{\partial {\mathcal M}\} = 2\{\gamma\} \in \pi_1({\mathcal M})
$$
and the class of $\gamma$ generates the fundamental group of $\mathcal M$.

\begin{figure}[h]
\begin{center}
\includegraphics[width=0.45\linewidth]{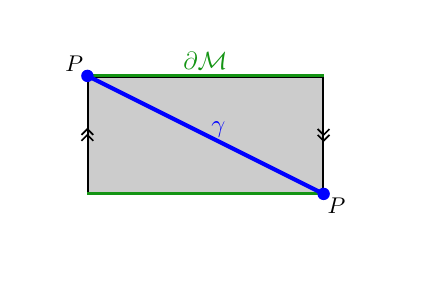}
\caption{The Moebius strip.} \label{fig-Moebius0}
\end{center}
\end{figure}

Let $\{\mathcal{M}_k\}_{k \in \N}$ be  an infinite number of copies of the Moebius strip $\mathcal{M}$ where the point corresponding to $P$ and the loop corresponding to $\gamma$ in the copy $\mathcal{M}_n$ are denoted respectively by $P_n$ and $\gamma_n$. 
We define a {\it Moebius telescope $\mathcal{T}_n$ of height $n$} as follows. 
Start with $\mathcal{T}_1=\mathcal{M}_0$  and then define  by induction
$$
\mathcal{T}_{n+1}=\mathcal{T}_{n} \mathop{\cup}\limits_{\varphi_{n}} \mathcal{M}_{n}
$$
where $\varphi_{n}$ is a homeomorphism between $\gamma_{n}$ and $\partial \mathcal{M}_{n-1}$ that sends $P_{n}$ to $P_{n-1}$. All gluing homeomorphisms will be chosen to be piecewise linear in the sequel.

 Observe that
 \begin{itemize}
 \item $\mathcal{T}_1 \subset \ldots \subset \mathcal{T}_{n-1} \subset \mathcal{T}_n$,
 \item all points $P_i \in \mathcal{M}_i$ for $i=0,\ldots,n-1$ glues onto a same point denoted
 $P$,
 \item $\gamma_0$ is a deformation retract of $\mathcal{T}_n$, and thus $\pi_1(\mathcal{T}_n) =\Z$,
 \item $\{\gamma_{i}\} = 2^{i}\{\gamma_0\}$ for $i=0,\ldots,n-1$.
 \end{itemize}

Fix a positive integer $m$ and define $n$ to be the smallest integer such that $m < 2^{n+1}$.
The dyadic decomposition of $m$ writes $m = 2^{n_1}+\ldots+2^{n_s}$ for some integers $0 \leq n_1 < \ldots < n_s = n$. In particular we have $s\leq n+1$. Let $\xi(m) = \gamma_{n_1}\star \gamma_{n_2}\star \ldots \star\gamma_{n_{s-1}}\star\partial \mathcal M_{n-1} \in \mathcal{T}_{n}$ be the loop based at $P$ ($\star$ denoting the concatenation operation for based loops). Consider the $2$-cell complex
$$
X_m = \mathcal{T}_{n} \mathop{\cup}\limits_{\xi(m)}D^2
$$
where the boundary of the $2$-disk $D^2$ is glued along the curve $\xi(m)$. Because $\{\xi(m)\} = m \{\gamma_0\}$ we get $\pi_1(X_m) = \Z_m$.

We shall construct an economic triangulation for $\mathcal{T}_{n}$, and then for $X_m$. Start with the minimal triangulation of $\R P^2$ which consists of $10$ triangles (see Figure
\ref{fig-trigRP2} and compare with Figure \ref{fig-Moebius1}). We fix a vertex $P$ and choose a simplicial loop $\gamma$ based at $P$ which generates $\pi_1(\R P^2)$ such as in Figure \ref{fig-Moebius1}. By deleting the interior of a triangle one of whose vertices is $P$ but which is not adjacent to any edge of the loop $\gamma$, we obtain a triangulation of $\mathcal{M}$ with $9$ triangles, see Figure \ref{fig-Moebius1}.

\begin{figure}[h]
\begin{center}
\includegraphics[width=0.8\linewidth]{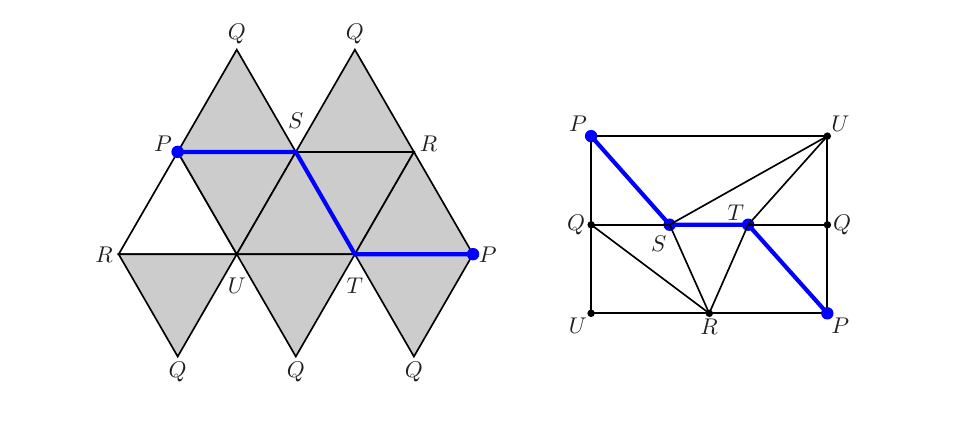}
\caption{A special triangulation of the Moebius strip.} \label{fig-Moebius1}
\end{center}
\end{figure}

If each Moebius strip of the Moebius telescope of height $n$ is endowed with this triangulation, we get a triangulation of $\mathcal{T}_{n}$ with  $9n$ triangles. Observe that the loop $\xi(m)$ consists of exactly $3s$ edges.
Now triangulate the $2$-disk $D^2$ by using at most $5s-2$ triangles as in Figure \ref{fig-trigB2} (compare with the proof of Proposition \ref{prop:c-kappa.comparaison}).

\begin{figure}[h]
\begin{center}
\includegraphics[width=0.5\linewidth]{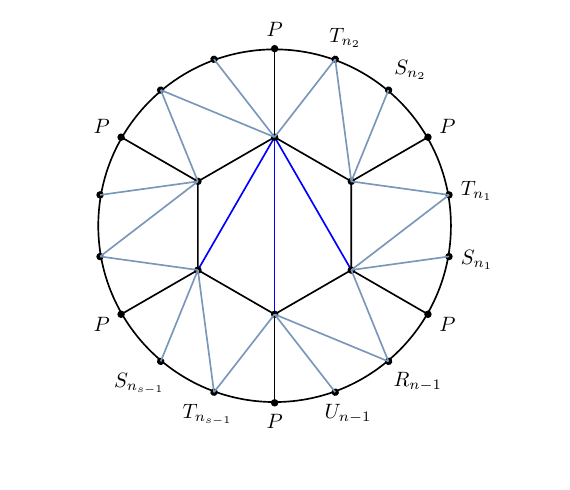}
\caption{Triangulation of $D^2$ for $s=6$.} \label{fig-trigB2}
\end{center}
\end{figure}

The triangulation of $X_m$ thus obtained satisfies
$$
s_2(X_m)\leq 9n + 5s-2 \leq 14n+3 \leq 14 \log_2m +3
$$
which concludes the proof of the lemma.
\end{proof}

\begin{remark}
The group $\Z_m$ can be realized as the fundamental group of a $2$-cell complex with only one $2$-cell. It can be shown that this complex can not be triangulated with less than $3m$ triangles, the boundary of the $2$-cell being mapped into the $1$-skeleton by a PL-map of degree $m$. The Moebius telescope thus shows that to get an economic simplicial complex whose fundamental group is $\Z_m$ we first have to start with a $2$-cell complex with roughly $\log_2 m$ cells of dimension $2$.
\end{remark}

\begin{remark} For $m=2^n+1$, the proof of Lemma \ref{lemme:1} implies that
$$
\kappa(\Z_m)\leq 9\log_2(m-1)+8.
$$
 For $m=3$ this upper bound is sharp (see the table in Example \ref{ex:table}) and the Moebius telescope gives thus the minimal complex for $\Z_3$. It is natural to ask whether
$\kappa(\Z_m)=9\log_2 (m-1)+8$ for $m=2^n+1$.
\end{remark}

We now prove the general upper bound of Theorem \ref{teo:cyclique}. Consider the decomposition
$$
G = \Z_{n_1} \oplus \Z_{n_2} \oplus \ldots \oplus \Z_{n_s}
$$
where $n_1|n_2|\dots |n_s$.

For $k=1,\ldots,s$ take the economic $2$-simplicial complex $X_{n_k}$ constructed in the proof of Lemma \ref{lemme:1} with fundamental group $\Z_{n_k}$. By gluing all the points $P_k \in X_{n_k}$ for $k=1,\ldots,s$ into the same point $P$, we obtain a $2$-simplicial complex
$$
\mathop{\bigvee}\limits_{k=1}^s X_{n_k}
$$
with at most
$$
\mathop{\sum}\limits_{k=1}^s (14\log_2n_k + 3) = 14\log_2|G| + 3s
$$
$2$-simplices.

The fundamental group of this wedge sum is the group
$$
\Z_{n_1}\ast\ldots\ast\Z_{n_s}.
$$
In order to get a simplicial complex whose fundamental group is $G$ we fix for each $k =1,\ldots,s$ a loop $\alpha_k \subset X_{n_k}$ based at $P$, consisting of three edges and generating the fundamental group $\pi_1(X_{n_k})\simeq \Z_{n_k}$. For each $1\leq k <l\leq s$ we glue a minimal triangulated $2$-torus to our $2$-simplicial complex by identifying the pair of loops $(\alpha_k,\alpha_l)$ with a pair of loops $(\alpha',\alpha'')$ of the minimal $2$-torus as depicted in Figure \ref{fig-trigtore}.
We thus get a new $2$-simplicial complex $X_G$ with fundamental group $G$ as each pair $(\alpha_k,\alpha_l)$ now commutes. Because we add $7 s (s-1)$ triangles ($14$ triangles for each minimal $2$-torus)  we have that
$$
s_2(X_G) \leq 14\log_2|G| + 3s + 7s(s-1) = 14\log_2|G| + 7s^2-4s,
$$
which concludes the proof of Theorem \ref{teo:cyclique}.
\end{proof}

\section{From simplicial complexity to systolic area for groups}

In this section we first prove Theorem \ref{th:finitude}. Then we show how to derive inequality (\ref{eq:syscyclique}).

\subsection{Proof of Theorem \ref{th:finitude}}
We start with the proof of the right-hand side inequality in Theorem \ref{th:finitude}. \\

By the right-hand side inequality of Theorem \ref{th:syscomplexity} we know that 
$$
\mathcal{G}_{\sigma}(T) \subset \mathcal{G}_{\kappa}\left(625 \cdot (50^2 T)^{1 + {2+\ln 25 \over \sqrt{\ln(50^2 T)}}}\right).
$$
This together with the right-hand side inequality in Theorem \ref{th:borne.sup} gives that
$$
|\mathcal{G}_{\sigma}(T)| \leq e^{6K\ln K},
$$
where $K = 625 \cdot (50^2 T)^{1 + {2+\ln 25 \over \sqrt{\ln(50^2 T)}}}$.
If $T \geq 2$, then 
$$
{2+\ln 25 \over \sqrt{\ln(50^2 T)}}\leq 2
$$
and we compute that
\begin{eqnarray*}
6K\ln K &= &6\cdot 625 \cdot (50^2 T)^{1 + {2+\ln 25 \over \sqrt{\ln(50^2 T)}}} \cdot  \ln \left(625 (50^2 T)^{1 + {2+\ln 25 \over \sqrt{\ln(50^2 T)}}} \right)\\
&\leq& {6 \cdot 625 \cdot 50^6}\cdot T^{1 + {2+\ln 25 \over \sqrt{\ln(50^2 T)}}} \cdot 4 \ln(50^2 T)\\
& \leq&{6 \cdot 625 \cdot 50^6}\cdot T^{1 + {39+\ln 25 \over \sqrt{\ln(50^2 T)}}}.  
\end{eqnarray*}
This implies the  right-hand side inequality in Theorem \ref{th:finitude} with $B= e^{6 \cdot 625 \cdot 50^6}$, $B'=39+\ln 25$ and $B'' = 50^2$.\\

We now turn to the proof of the right-hand side inequality in Theorem \ref{th:finitude}.\\

The first strategy we can try is the following. According to Theorem \ref{th:syscomplexity} we have
$$
\sigma(G)\leq \frac{\kappa(G)}{2\pi}
$$
which implies the inclusion
$$
\mathcal{A}_{\kappa}(T) \subset \mathcal{A}_{\sigma}\left({T \over 2\pi}\right).
$$
Thus for any positive $T$
$$
\left|\mathcal{A}_{\sigma}\left({T \over 2\pi}\right)\right|\geq \left|\mathcal{A}_{\kappa}(T) \right|\geq  \left[2^{{T - 3} \over 14}\right]
$$
according to Corollary \ref{cor:number.inf}.
But this lower bound is not as good as the left-hand side of the inequality stated in Theorem \ref{th:finitude}. To improve our estimate we proceed as follows.\\

We construct a metric version of the Moebius telescope. 
First consider the Moebius strip endowed with the Riemannian metric of constant curvature $1$ given by the quotient of a spherical strip by the central symmetry like in Figure \ref{fig-bandespherique}.

\begin{figure}[h]
\begin{center}
\includegraphics[width=0.35\linewidth]{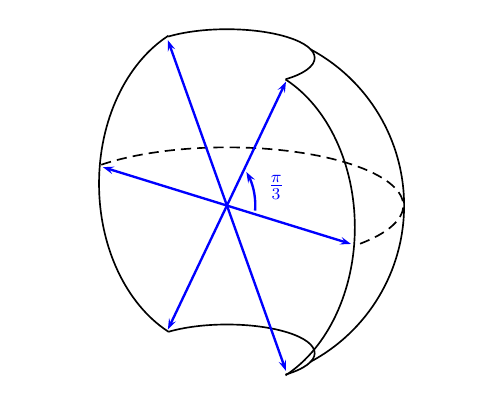}
\caption{Spherical strip.} \label{fig-bandespherique}
\end{center}
\end{figure}

The height of this spherical strip is chosen to be $2\pi/3$ and its area is thus equal to $2\pi\sqrt{3}$. The equatorial curve and the boundary have the same length $\pi$.

The metric version of the Moebius telescope is constructed in the same way as in Lemma \ref{lemme:1} using this special metric on each Moebius strip except that at each step the boundary of ${\mathcal M}_{k-1}$ is identified with the equatorial curve of ${\mathcal M}_k$ instead of with $\gamma_k$. Here $\gamma_k$ is realized as half a great circle whose intersection with the equatorial curve is a point denoted by $p_k$ and its intersection with the boundary curve another point denoted by  $q_k$. Observe that the length of $\gamma_k$ is equal to $\pi$ and that $\gamma_k$ is tangent to the boundary at $q_k$. We glue together the Moebius strips in such a way that the point $q_k$ on $\partial {\mathcal M}_k$ coincides with the point $p_{k+1}$ on the equatorial curve of ${\mathcal M}_{k+1}$ for $k=0,\ldots,n-1$.

Fix a positive integer $m$ and define $n$ to be the smallest integer such that $m < 2^{n+1}$.
The dyadic decomposition of $m$ writes $m = 2^{n_1}+\ldots+2^{n_s}$ for some integers $0 \leq n_1 < \ldots < n_s = n$. We define an analog of $\xi(m)$ by first opening up the closed curves $\gamma_{n_1}, \gamma_{n_2}, \ldots ,\gamma_{n_{s-1}},\partial \mathcal M_{n-1}$ at the points $q_{n_1}, p_{n_2}, q_{n_2}, p_{n_3}, q_{n_3},\ldots,p_{n_{s-1}}, q_{n_{s-1}}$ and $p_n$, and then connect each pair of points $(q_{n_i}, p_{n_{i+1}})$ (appearing twice) for $i=1,\ldots, s-1$ through a length minimizing arc. It may happen that $q_{n_i}= p_{n_{i+1}}$ in which case our procedure reduces to the concatenation operation at this point of the curves $\gamma_{n_i}$ and $\gamma_{n_{i+1}}$. As the distance between $p_k$ and $q_k$ is equal to $\pi/2$, we get that the curve $\xi(m)$ thus obtained is of length at most $\pi (n+1)$. We then glue a disk $D^2$ of constant curvature $1$ and radius $\frac \pi  2$ having at its center a conical singularity. The angle of the conical singularity is chosen such that the length of the boundary equals the length of the curve $\xi(m)$. We denote by $Y_m$ the resulting $2$-dimensional simplicial complex endowed with this piecewise smooth Riemannian metric.
Because the length of $\xi(m)$ is at most equal to  $\pi  (n+1)$, the area of the $2$-disk is also at most equal to $\pi (n+1)$. This implies that 
$$
\area(Y_m)\leq {(1+2\sqrt{3}) \pi} \, (n+1)\leq  {(1+2\sqrt{3}) \pi} \,(1+ \log_2m).
$$
Next we show the following.
\begin{lemma}
$\sys(Y_m)=\pi$.
\end{lemma}

\begin{proof}
Let $\gamma$ be a systolic loop, that is a non-contractible and simple closed curve of length equal to the systole. If $\gamma$ meets the conical singularity of $D^2$ then its length is at least equal to $\pi$. If $\gamma$ does not touch the conical singularity but crosses the interior of $D^2$, we can continuously deform this portion of $\gamma$ to the boundary of $D^2$ without increasing its length.

So we now assume that $\gamma \subset {\mathcal T}_n$. If $\gamma$ entirely lies inside some Moebius band ${\mathcal M}_k$ then it is straightforward to check that its length is at least equal to $\pi$. 
If not, let denote by ${\mathcal M}_k$ a Moebius band whose interior is crossed by $\gamma$ and consider a maximal non-empty connected subarc $c$  of $\gamma \cap {\mathcal M}_k$. There are three alternatives.

\noindent a) The endpoints of $c$ both lie on the equatorial curve of ${\mathcal M}_k$. Because $c$ is a geodesic arc, it is a half of a great circle and its length is at least $\pi$;

\noindent b) The endpoints of $c$ both lie on $\partial {\mathcal M}_k$. This subarc being a portion of a great circle, it necessarily cuts the equatorial curve and its length is thus  at least equal to $2\pi/3$. The remaining part of the curve must both start and end transversally to the equatorial curve of ${\mathcal M}_{k+1}$. If not, the entire curve would lie inside ${\mathcal M}_k$, case which has been excluded (in particular the case where $k=n$ is ruled out). Thus either the remaining curve is entirely contained in ${\mathcal M}_{k+1}$ in which case its length is equal to $\pi$, or its length is at least twice the distance from the equatorial curve to the boundary of ${\mathcal M}_{k+1}$, that is at least $2\pi/3$. Thus the length of $\gamma$ is always at least equal to $\pi$;

\noindent c) One of the endpoints of $c$ lies on the equatorial curve of ${\mathcal M}_k$ and the other on $\partial {\mathcal M}_k$. This subarc has length at least equal to $\pi/3$. By arguing in the same way as above, we see that the remaining part of $\gamma$ has length at least equal to $2\pi/3$ thus concluding the proof. 
\end{proof}

Because $\pi_1(Y_m)=\Z_m$, this implies that
$$
\sigma(\Z_m)\leq {1+2\sqrt{3}\over \pi} \, (1+\log_2m).
$$
We thus obtain the right-hand side of inequality (\ref{eq:syscyclique}), and the left-hand side inequality in Theorem \ref{th:finitude}:
$$
 |\mathcal{A}_{\sigma}(T)|\geq \left[2^{{\pi \over 1+2\sqrt{3}} T}\right].
$$

\subsection{Systolic area for finite abelian groups} 

For a finite abelian group $G$, the behaviour of $\sigma (G)$ in terms of the number $|G|$ of its elements  and the number of its invariant factors can be described by combining Theorems \ref{th:syscomplexity} and \ref{teo:cyclique}. First we easily derive the following.

\begin{theorem}\label{teo:sigma.abelien}
Let $G$ be a finite abelian group with $s$ invariant factors. Then
$$
\sigma (G) \leq {1\over 2\pi}\left(14 \log_2|G| + 7s^2 - 4s\right).
$$
\end{theorem}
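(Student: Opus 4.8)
The plan is to obtain the bound by simply chaining together the two comparison results already available: the universal inequality $2\pi\,\sigma(G)\le\kappa(G)$ furnished by Theorem \ref{th:syscomplexity}, and the explicit upper estimate for the simplicial complexity of a finite abelian group provided by Theorem \ref{teo:cyclique}.

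First I would note that a finite abelian group $G$ is finite, hence automatically of zero free index, so the hypotheses of Theorem \ref{th:syscomplexity} are satisfied and its left-hand inequality gives
\[
2\pi\,\sigma(G)\ \le\ \kappa(G).
\]
Alternatively, and without invoking the full strength of Theorem \ref{th:syscomplexity}, one may reprove this directly from the construction in Section \ref{sec:complexityvssystolicarea}: take a simplicial $2$-complex $X$ minimal for $G$, endow it with the metric $h$ in which every edge has length $2\pi/3$ and every face is a round unit hemisphere; then $\area(X,h)=2\pi\kappa(G)$, every systolic geodesic can be homotoped into the $1$-skeleton and so crosses at least three edges, giving $\sys(X,h)\ge 2\pi$, whence $\sigma(G)\le\sigma(X,h)\le\kappa(G)/(2\pi)$.

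Next, writing $s$ for the number of invariant factors of $G$, Theorem \ref{teo:cyclique} yields
\[
\kappa(G)\ \le\ 14\log_2|G|+7s^2-4s .
\]
Combining the two displayed inequalities gives
\[
\sigma(G)\ \le\ \frac{1}{2\pi}\,\kappa(G)\ \le\ \frac{1}{2\pi}\left(14\log_2|G|+7s^2-4s\right),
\]
which is exactly the asserted estimate.

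I do not expect any genuine obstacle in this argument: it is a one-line consequence of the earlier theorems. The only points deserving a brief mention are the verification that Theorem \ref{th:syscomplexity} applies (immediate, as finite groups have zero free index) and the matching of notation, namely that the parameter $s$ in Theorem \ref{teo:cyclique} is precisely the number of invariant factors appearing in the present statement.
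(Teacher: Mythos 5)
Your proposal is correct and is essentially the paper's own argument: the theorem is obtained precisely by combining the left-hand inequality $2\pi\,\sigma(G)\le\kappa(G)$ of Theorem \ref{th:syscomplexity} (valid since a finite group has zero free index) with the upper bound of Theorem \ref{teo:cyclique}. Your remark that the first inequality can also be reproved directly via the hemispherical metric on a minimal complex matches the paper's proof of that inequality, so nothing further is needed.
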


Now recall that given a group $G$ with free index zero we have
$$
\kappa(G)\leq 625 \, (50^2 \cdot \sigma(G))^{1+{2+\ln 25 \over \sqrt{\ln (50^2 \cdot \sigma(G))}}}
$$
according to Theorems \ref{th:syscomplexity}.
We get
$$
\ln \left({\kappa(G) \over 625}\right) \leq (2+\ln 25) \, \sqrt{\ln (50^2 \cdot \sigma(G))}+\ln (50^2 \cdot \sigma(G))
$$
which implies with $C=2+\ln 25\simeq 5.2$ that
$$
\sqrt{\ln (50^2 \cdot \sigma(G))}\geq \frac{\sqrt{C^2+4 \ln \left({\kappa(G) \over 625}\right) }-C}{2}
$$
provided $\kappa(G) \geq 625 \cdot e^{-C^2\over 4} \simeq 0,69$. This condition being always fullfilled as $\kappa(G) \neq 0$, we deduce the following result.

\begin{corollary}
Let $G$ be a group with free index zero. Then
$$
\sigma (G) \geq {1 \over 50^2}\left({\kappa(G) \over 625}\right)^{1-\varphi\left({\kappa(G) \over 625}\right)},
$$
where $\varphi : [1,\infty[ \to \R$ denotes the decreasing function given by the formula
$$
\varphi(x) = {C^2\over 2 \ln x}\left(\sqrt{1+{4 \ln x \over C^2}}-1\right).
$$
\end{corollary}

Because a finite abelian group $G$ satisfies
$$
\kappa(G) \geq 2 \, \log_3 |G|,
$$
we derive the following lowerbound.

\begin{theorem}\label{teo:sigma.abelien2}
A finite abelian group $G$ satisfies the inequality
$$
\sigma (G) \geq  {1 \over 50^2} \left({2\log_3(|G|)\over 625} \right)^{1 -
\varphi\left({2\log_3(|G|)\over 625} \right)}.
$$
\end{theorem}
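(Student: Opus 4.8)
The plan is to derive Theorem~\ref{teo:sigma.abelien2} as an essentially formal consequence of the two inequalities already assembled just above its statement, namely the lower bound $\kappa(G)\geq 2\log_3|G|$ from Theorem~\ref{teo:cyclique} (equivalently Proposition~\ref{prop:simplexe2}) and the upper bound $\kappa(G)\leq C\,\sigma(G)^{1+C'/\sqrt{\log_2\sigma(G)}}$ from Theorem~\ref{th:syscomplexity}. Chaining these gives $2\log_3|G|\leq C\,\sigma(G)^{1+C'/\sqrt{\log_2\sigma(G)}}$, and the whole proof is the algebraic manipulation of this one inequality to isolate $\sigma(G)$.

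First I would set $x:=\dfrac{2\log_3|G|}{C}$, so the displayed inequality reads $x\leq \sigma(G)^{1+C'/\sqrt{\log_2\sigma(G)}}$. Taking $\log_2$ of both sides and writing $u:=\sqrt{\log_2\sigma(G)}$ turns this into $\log_2 x\leq \log_2\sigma(G)+C'\sqrt{\log_2\sigma(G)}=u^2+C'u$. Solving the quadratic inequality $u^2+C'u-\log_2 x\geq 0$ for the nonnegative root gives exactly the bound
$$
u=\sqrt{\log_2\sigma(G)}\geq \frac{\sqrt{(C')^2+4\log_2 x}-C'}{2},
$$
which is the penultimate display in the excerpt. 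Squaring, one obtains a lower bound on $\log_2\sigma(G)$, hence on $\sigma(G)$; the point of the remaining work is to repackage this lower bound in the clean form $x^{1-\phi(x)}$.

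Next I would recognize that $\phi(x)=\dfrac{2}{1+\sqrt{1+4\log_2 x/(C')^2}}$ is precisely chosen so that $1-\phi(x)$ equals the ratio between the lower bound just obtained for $\log_2\sigma(G)$ and $\log_2 x$. Concretely, from $u\geq \tfrac12(\sqrt{(C')^2+4\log_2 x}-C')$ one computes, by multiplying numerator and denominator by the conjugate $\sqrt{(C')^2+4\log_2 x}+C'$, that
$$
\log_2\sigma(G)\geq u^2\geq \log_2 x\cdot\left(1-\frac{2C'}{C'+\sqrt{(C')^2+4\log_2 x}}\right)=\log_2 x\cdot\bigl(1-\phi(x)\bigr),
$$
using $\phi(x)=\dfrac{2C'}{C'+\sqrt{(C')^2+4\log_2 x}}=\dfrac{2}{1+\sqrt{1+4\log_2 x/(C')^2}}$. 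Exponentiating base $2$ yields $\sigma(G)\geq x^{1-\phi(x)}$, which is the claim. One should also check that $\phi$ is well defined and decreasing on $[1,\infty[$ and that $x\geq 1$ whenever the statement is nonvacuous, so that all the logarithms and square roots above are legitimate and the squaring step $u^2\geq(\text{lower bound})^2$ is monotone; these are the places where the hypothesis that $|G|$ is large (so that $2\log_3|G|\geq C$, i.e.\ $x\geq 1$) is used.

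The main obstacle is not conceptual but bookkeeping: one must be careful that squaring the inequality for $u$ preserves its direction (legitimate since both sides are nonnegative once $x\geq 1$), and that the conjugate-multiplication identity simplifying $\bigl(\sqrt{(C')^2+4\log_2 x}-C'\bigr)^2/4$ into $\log_2 x\cdot(1-\phi(x))$ is carried out correctly; this is where an algebra slip would most easily creep in. Everything else is substitution. I expect the written proof to be no more than a handful of lines, simply executing the substitutions $x=2\log_3|G|/C$ and $u=\sqrt{\log_2\sigma(G)}$, recording the quadratic inequality and its root, performing the conjugate simplification, and exponentiating.
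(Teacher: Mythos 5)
Your proposal is correct and follows essentially the same route as the paper: the paper's proof is exactly the chain $2\log_3|G|\leq\kappa(G)\leq C\,\sigma(G)^{1+C'/\sqrt{\log_2\sigma(G)}}$, taking $\log_2$, solving the quadratic in $\sqrt{\log_2\sigma(G)}$, and defining $\phi$ so that the squared root bound equals $\log_2 x\cdot(1-\phi(x))$ before exponentiating. Your conjugate computation (which is in fact an exact identity, $\bigl(\sqrt{(C')^2+4\log_2 x}-C'\bigr)^2/4=\log_2 x\,(1-\phi(x))$) just makes explicit the step the paper leaves implicit.
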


Finally, observe that $\varphi(x) \sim {C \over \sqrt{\ln x}}$
for large values of $x$, which leads in particular to the almost logarithmic asymptotic lower bound on $\sigma(\Z_m)$ presented in the introduction (see the left-hand side of inequality (\ref{eq:syscyclique})).

\begin{corollary}\label{cor:sigma.Z.m}
For any positive $\varepsilon$,
$$
\Sys (\Z_m)\geq (\log_2 m)^{1 - \varepsilon}
$$
provided $m$ is large enough.
\end{corollary}

In the same way we conclude that for any positive $\varepsilon$
$$
\Sys(G) \geq (\ln |\mbox{Tors}\,H_1(G, \Z)|)^{1-\varepsilon}
$$
for groups with large torsion in homology according to Proposition \ref{prop:simplexe2}.

\section{Applications to systolic volume of homology classes}\label{sec:sysvol}

In this section we first recall the definition of systolic volume associated to a homology class of a group, and then explain how to derive an interesting lower bound for the systolic volume in terms of the $1$-torsion of the group using the notion of simplicial complexity. \\

Let ${\bf a} \in H_n(G,\Z)$ be  an $n$-dimensional homology class of a group $G$ where $n$ denotes some positive integer.
A {\it geometric cycle $(X,f)$ representing the class ${\bf a}$} is a pair $(X,f)$ where
$X$ is an orientable pseudomanifold $X$ of dimension $n$
and $f : X \to K(G,1)$ a continuous map such that $f_\ast[X]={\bf a}$ where $[X]$ denotes the fundamental class of $X$ and $K(G,1)$ is 
the Eilenberg-MacLane space of $G$.
 The representation is said to be {\it normal} if in addition the
induced map $f_\sharp : \pi_1(X) \to G$ is an epimorphism. Given a
geometric cycle $(X,f)$ and a piecewise smooth metric $g$ on $X$, the  {\it relative homotopic
systole} $\sys_f(X,g)$ is defined as the least length of a loop
$\gamma$ of $X$ such that $f\circ \gamma$ is not contractible. The
{\it systolic volume} of $(X,f)$ is then the number
$$
\SSys_f(X):=\underset{g}{\inf}\, \frac{\vol(X,g)}{\sys_f(X,g)^n},
$$
where the infimum is taken over all piecewise smooth metrics $g$  on $X$
and $\vol(X,g)$ denotes the $n$-dimensional volume of $X$. When $f
: X \to K(\pi_1(X),1)$ is the classifying map (induced by an
isomorphism between the fundamental groups), we simply denote by
$\SSys(X)$ the systolic volume of the pair $(X,f)$.  From
\cite[Section 6]{Gro83} we know that for any  dimension $n$
$$
\SSys_n:= \underset{(X,f)}{\inf}\, \SSys_f(X) >0,
$$
the infimum being taken over all geometric cycles $(X,f)$
representing a non trivial homology class of dimension  $n$. The
following notion was introduced by Gromov in \cite[Section
6]{Gro83}.

\begin{definition}\label{def:sys}
The {\it systolic volume} of a homology class ${\bf a} \in H_n(G,\Z)$ is defined as the
number
$$
\SSys({\bf a}):=\underset{(X,f)}{\inf} \, \SSys_f(X)
$$
where the infimum is taken over all geometric cycles $(X,f)$
representing the class ${\bf a}$.
\end{definition}
Observe that for any homology class ${\bf a} \in H_2(G,\Z)$ we have
$$
\SSys({\bf a})\geq \sigma(G).
$$
Recently the systolic volume of homology classes has been extensively studied in \cite{BB15} where the reader can find numerous results on this invariant.

\subsection{A lower bound of systolic volume by $1$-torsion}
We now define the  $1$-torsion of a homology class and explain how to use it to bound from below its systolic volume.
\begin{definition}\label{def:1tor}
The {\it $1$-torsion of a homology class ${\bf a}  \in H_n(G,\Z)$} is defined as the
number
$$
t_1({\bf a})=  \underset{(X,f)}{\inf} |\mbox{Tors}\,H_1(X, \Z)|
$$
where the infimum is taken over the set of geometric cycles $(X,f)$
representing the class  ${\bf a}$ and $|\mbox{Tors}\,H_1(X, \Z)|$
denotes the number of torsion elements in the first integral
homology group of  $X$.
\end{definition}

 We now present the main result of this
section.

\begin{theorem}\label{th:torsion}
Let $G$ be a group and ${\bf a} \in
H_n(G,\Z)$. Then
$$
\SSys({\bf a}) \geq C_n\big{(}\ln t_1({\bf a})\big{)}^{1 -
\frac{C'_n}{\sqrt{\ln (\ln t_1({\bf a})})}} ,
$$
where $C_n$ and $C'_n$ are two positive numbers depending only on
$n$.
\end{theorem}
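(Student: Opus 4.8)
The plan is to reduce the statement about the systolic volume of a homology class to the comparison theorem (Theorem \ref{th:syscomplexity}) applied to a suitable subgroup of $G$, together with the torsion lower bound for simplicial complexity (Proposition \ref{prop:simplexe2}). The key observation is that a geometric cycle $(X,f)$ representing ${\bf a}$, when made normal, carries enough information to control $\kappa$ of its fundamental group, and that $\Sys_f(X)$ dominates $\sigma$ of an appropriate $2$-complex. First I would recall the following standard fact (used already in \cite{Gro83,BB10}): for any geometric cycle $(X,f)$ representing ${\bf a}$ one may, without decreasing $|\mathrm{Tors}\,H_1(X,\Z)|$ too much and without increasing $\Sys_f(X)$, pass to a \emph{normal} geometric cycle; moreover, since systolic volume only sees the $2$-dimensional homotopy-theoretic part, one can associate to $(X,f)$ a $2$-complex $P$ with $\pi_1(P) = \pi_1(X)/N$ for a suitable normal subgroup $N$ (the kernel of $f_\sharp$, or rather the subgroup killed when collapsing to dimension $2$), such that $\Sys_f(X) \geq c\cdot \sigma(\pi_1(P))$ for a universal constant $c$.

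The heart of the argument is then the chain of inequalities
\begin{equation}\label{eq:chain.torsion}
\Sys({\bf a}) \;\gtrsim\; \sigma(H) \;\gtrsim\; \kappa(H)^{1-\varepsilon} \;\gtrsim\; \big(\log |\mathrm{Tors}\,H_1(H,\Z)|\big)^{1-\varepsilon},
\end{equation}
where $H$ is the relevant quotient group and $\varepsilon$ is governed by the function $\phi$ appearing before Theorem \ref{teo:sigma.abelien2}. The middle inequality is exactly the left-hand inversion of Theorem \ref{th:syscomplexity}: from $\kappa(H) \leq C\,\sigma(H)^{1+C'/\sqrt{\log_2\sigma(H)}}$ one solves for $\sigma(H)$ and obtains $\sigma(H) \geq \big(\kappa(H)/C\big)^{1-\phi(\kappa(H)/C)}$, precisely as in the passage to Theorem \ref{teo:sigma.abelien2}. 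The last inequality is Proposition \ref{prop:simplexe2}, which gives $\kappa(H) \geq 2\log_3|\mathrm{Tors}\,H_1(H,\Z)|$. The constants $C_n, C'_n$ absorb: the universal dimensional constant $c$ relating $\Sys_f(X)$ to $\sigma$, the constants $C, C'$ from Theorem \ref{th:syscomplexity}, the $\log_3$ versus $\ln$ conversion, and the $n$-dependence hidden in $\SSys_n > 0$ when one normalizes the cycle. Taking the infimum over all geometric cycles $(X,f)$ representing ${\bf a}$ on the right-hand side of \eqref{eq:chain.torsion} produces $\log t_1({\bf a})$ by the very definition of $t_1$ (Definition \ref{def:1tor}), which yields the claimed bound.

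The main obstacle I anticipate is the first step: establishing rigorously that from an $n$-dimensional normal geometric cycle $(X,f)$ one can extract a $2$-complex whose fundamental group retains the torsion of $H_1(X,\Z)$ \emph{and} whose systolic area is controlled by $\Sys_f(X)$. One cannot simply take the $2$-skeleton, since that may not be a cycle and its metric ball estimates need not persist; instead one should run an argument in the spirit of the proof of Theorem \ref{th:syscomplexity} in Section \ref{sec:complexityvssystolicarea} — covering $X$ by admissible balls, building the nerve $\mathcal N$, and checking that the classifying map $f$ factors through the nerve so that $\pi_1(\mathcal N)$ surjects onto the image of $f_\sharp$ while $H_1(\mathcal N,\Z)$ inherits the torsion of $H_1(X,\Z)$ via $f_*$. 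A subtlety is that torsion in $H_1(X,\Z)$ need not inject into $H_1$ of a $K(G,1)$-model, so one must work with $\mathrm{Tors}\,H_1(X,\Z)$ directly rather than with $\mathrm{Tors}\,H_1(G,\Z)$, which is exactly why the statement is phrased in terms of $t_1({\bf a})$ and takes an infimum over cycles. Once this geometric reduction is in place, the rest is a bookkeeping of constants along \eqref{eq:chain.torsion}.
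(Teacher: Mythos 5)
The chain you propose hinges entirely on its first link, and that link is precisely what is missing: you never establish that from an $n$-dimensional geometric cycle $(X,f)$ one can extract a $2$-complex $P$ with $\SSys_f(X)\geq c\,\sigma(\pi_1(P))$ \emph{and} $|\mbox{Tors}\,H_1(\pi_1(P),\Z)|\geq |\mbox{Tors}\,H_1(X,\Z)|$. As stated this step is not just unproved but doubtful: the relative systole $\sys_f$ only controls loops that are essential \emph{after} composing with $f$, so a nerve-type argument run on $(X,g)$ can only recover (a quotient mapping onto) the image of $f_\sharp$, i.e.\ essentially $G$; short loops lying in $\ker f_\sharp$ may carry exactly the torsion of $H_1(X,\Z)$ that the theorem is about, and nothing in the construction prevents them from being collapsed. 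This is the very subtlety you flag at the end, but flagging it does not resolve it, and the resolution cannot be ``work with $\mbox{Tors}\,H_1(X,\Z)$ directly'' while simultaneously bounding $\sigma$ of a group reached through $f$ --- the group whose systolic area you can control and the space whose $1$-torsion you need are not the same object. In addition, the nerve argument of Section \ref{sec:complexityvssystolicarea} uses the local volume bound $|B(p,R)|\geq R^2/4$ from \cite{RS08}, which is specific to $2$-complexes realizing $\sigma(G)$; for an $n$-dimensional cycle with the relative systole one needs the higher-dimensional machinery of Gromov, and the nerve one obtains is not $2$-dimensional, so Theorem \ref{th:syscomplexity} cannot be applied to it as you suggest.

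The paper's proof avoids groups and $\sigma$ altogether. It invokes Gromov's inequality \cite[6.4.C'']{Gro83}: there are constants $c_n,c'_n$ with $\SSys({\bf a})\geq c_n\, h({\bf a})/\exp\bigl(c'_n\sqrt{\ln h({\bf a})}\bigr)$, where $h({\bf a})$ is the simplicial height of the class (minimal total number of simplices of a geometric cycle representing ${\bf a}$). Since \emph{every} representing cycle $X$ satisfies $|\mbox{Tors}\,H_1(X,\Z)|\geq t_1({\bf a})$ by Definition \ref{def:1tor}, the counting argument of Proposition \ref{prop:simplexe2} gives $h({\bf a})\geq 2\log_3 t_1({\bf a})$, and the theorem follows by substitution. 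This is where the quasi-linear comparison in dimension $n$ comes from --- Gromov's height inequality plays the role you wanted Theorem \ref{th:syscomplexity} to play, and because it is formulated for cycles in the given homology class, the infimum defining $t_1({\bf a})$ matches it exactly, with no need to pass through a fundamental group. If you wish to salvage your route, you would in effect have to reprove a statement of the strength of \cite[6.4.C'']{Gro83}, which is a substantial piece of work and not a bookkeeping of constants.
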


In particular,
$$
\SSys({\bf a})\geq (\ln t_1({\bf a}))^{1-\varepsilon}
$$
for any $\varepsilon>0$ provided $t_1({\bf a})$ is large enough. It is
important to remark that there is no hope in dimension $\geq 3$  to
prove a universal lower bound of the type
$$
\SSys({\bf a})\geq C \ln t_1({\bf a})
$$
for some positive constant $C$. Indeed, for any positive integer
$s$, the Eilenberg-MacLane space of the group
$$
G_s:=\underset{s}{\underbrace{\Z_2 \ast \ldots \ast \Z_2}}
$$
is the complex $\bigvee_{i=1}^s \R P^\infty_i$. If  $n$ is a positive integer and $\R P^{2n+1}_i
\subset \R P^\infty_i$ denotes the skeleton of odd dimension
$2n+1$ of the $i$-th component, we consider the sequence of
homology classes
$$
{\bf a}_s=\sum_{i=1}^s[\R P^{2n+1}_i] \in H_{2n+1}(G_s,\Z).
$$
We can see that $t_1({\bf a}_s)=2^s$ as $|\mbox{Tors}\,H_1(X, \Z)|\geq 2^s$ for any
representation $(X,f)$ of ${\bf a}$. According to
 \cite[Theorem 5.4]{BB15} this implies that
$$
\SSys({\bf a}_s)\leq \SSys(\#_{i=1}^s \R P^{2n+1}_i)\leq C
\cdot \frac{\ln t_1({\bf a}_s)}{\ln \ln t_1({\bf a}_s)}
$$
for some positive constant $C$. For even dimension, we consider
the sequence of classes $\tilde{{\bf a}}_s=[S^1] \times {\bf a}_s
\in H_{2n+2}(\Z \times G_s,\Z)$
for which the same upper bound holds.  \\

\begin{proof}[Proof of Theorem \ref{th:torsion}] Let $G$ be a finitely
presentable group and ${\bf a} \in H_n(G,\Z)$ a homology class. Recall
that the {\it simplicial height} $h({\bf a})$ of a homology class
${\bf a}$ is the minimum number of simplexes (of any dimension)
of a simplicial complex representing the class ${\bf a}$.
According to \cite[6.4.C'']{Gro83} (see also \cite[3.C.3]{Gro96})
there exists two positive numbers $c_n$ and $c'_n$ depending only on the
dimension $n$ such that
$$
\SSys({\bf a}) \geq c_n \cdot \frac{h({\bf
a})}{\exp(c'_n\sqrt{\ln h({\bf a})})}.
$$
We conclude using Proposition \ref{prop:simplexe2} which asserts that
$$
h({\bf a}) \geq 2 \, \log_3 \, t_1({\bf a}).
$$
 \end{proof}

\subsection{Application to lens spaces}
In general the $1$-torsion of a class is difficult to compute. In
the case of $G = \Z_m$, we can estimate from below the $1$-torsion
of any generator by the number $m$ as follows.

\begin{lemma}\label{lemma:representation}
Let ${\bf a}$ be a generator of $H_{2n+1}(\Z_m, \Z)$. Then
$$
t_1({\bf a})\geq m.
$$
\end{lemma}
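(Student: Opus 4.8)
The plan is to reduce the statement to a homological computation on an arbitrary geometric cycle $(X,f)$ representing the generator ${\bf a}\in H_{2n+1}(\Z_m,\Z)$, and then to bound $|\mathrm{Tors}\,H_1(X,\Z)|$ from below by $m$ using the naturality of the cap product (or equivalently of Poincaré–Lefschetz-type duality pairings on pseudomanifolds together with the Bockstein). First I would recall that $H^*(\Z_m,\Z)$ is $\Z$ in degree $0$, $\Z_m$ in each positive even degree, and $0$ in each positive odd degree, while $H_*(\Z_m,\Z)$ is $\Z$ in degree $0$, $0$ in positive even degrees, and $\Z_m$ in positive odd degrees. The class ${\bf a}$ being a generator of $H_{2n+1}(\Z_m,\Z)\cong\Z_m$, its image under the classifying inclusion is detected by pairing against cohomology: there is a class $\beta\in H^{2n+2}(\Z_m,\Z_m)$, built as the $n$-fold cup power of the generator of $H^2(\Z_m,\Z_m)$ times the generator of $H^2$, such that evaluating the appropriate Bockstein/cup-product expression on ${\bf a}$ gives a generator of $\Z_m$. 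Concretely, writing $u$ for a generator of $H^1(\Z_m;\Z_m)$ and $v=\beta_m(u)\in H^2(\Z_m;\Z_m)$ its Bockstein, the element $u\cdot v^{\,n}\in H^{2n+1}(\Z_m;\Z_m)$ evaluates non-trivially on ${\bf a}$.

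Next I would pull these cohomology classes back via $f:X\to K(\Z_m,1)$ to get $\bar u=f^*u\in H^1(X;\Z_m)$ and $\bar v=f^*v\in H^2(X;\Z_m)$, with $\bar v=\beta_m(\bar u)$ by naturality of the Bockstein. By naturality of the evaluation pairing and $f_*[X]={\bf a}$, the class $\bar u\cdot\bar v^{\,n}\in H^{2n+1}(X;\Z_m)$ satisfies $\langle \bar u\cdot\bar v^{\,n},[X]\rangle = \langle u\cdot v^{\,n},{\bf a}\rangle$, which is a unit in $\Z_m$; in particular $\bar u$ and $\bar v$ are nonzero and, more importantly, $\bar v^{\,n}\neq 0$ and $\bar u\cdot\bar v^{\,n}\neq 0$ in $H^*(X;\Z_m)$. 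Since $\bar v=\beta_m(\bar u)$ is in the image of the mod-$m$ Bockstein, it comes from the $m$-torsion of $H^2(X;\Z)$, hence $H^2(X;\Z)$ has a $\Z_m$ summand in its torsion; by the universal coefficient theorem (as used in Proposition~\ref{prop:simplexe2}) this forces $\mathrm{Tors}\,H_1(X;\Z)$ to contain an element of order $m$, giving already $t_1({\bf a})\ge m$ at the level of a single cycle — and since $(X,f)$ was arbitrary, $t_1({\bf a})\ge m$.

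The step I expect to be the main obstacle is making the duality/cup-product bookkeeping rigorous on a general orientable pseudomanifold $X$ rather than a manifold: cup products and the cap product with the fundamental class are fine on any space, but one must be careful that "$\bar u\cdot\bar v^{\,n}\neq 0$ forces enough torsion in $H^2(X;\Z)$" really only uses the Bockstein and UCT and not Poincaré duality, so that no smoothness or manifold hypothesis sneaks in. I would handle this by phrasing everything in terms of the map $f$ and the ring structure of $H^*(\Z_m;\Z_m)$: the key input is purely that $v^n\neq0$ in $H^{2n}(\Z_m;\Z_m)$ and that $v$ is a Bockstein, both of which pull back, and that a single nonzero Bockstein class in $H^2(X;\Z_m)$ already certifies an order-$m$ element in $\mathrm{Tors}\,H_1(X;\Z)$ via the exact sequence $H^1(X;\Z)\to H^1(X;\Z_m)\xrightarrow{\beta_m}H^2(X;\Z)\xrightarrow{\times m}H^2(X;\Z)$ combined with $\mathrm{Tors}\,H^2(X;\Z)\cong\mathrm{Tors}\,H_1(X;\Z)$. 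The only place the higher cohomology of $\Z_m$ enters is to guarantee $\bar u\neq0$ (equivalently $\langle u\cdot v^n,{\bf a}\rangle\neq0$), which is where the hypothesis that ${\bf a}$ is a \emph{generator} — and not merely nonzero — is used; if one only wanted a lower bound by $m/\gcd(k,m)$ for a class $k{\bf a}$, the same argument would apply verbatim with $u\cdot v^n$ evaluating to $k$.
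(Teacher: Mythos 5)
Your reduction to a single geometric cycle, the choice of $u$ and $v=\beta_m(u)$ with $u\cup v^n$ generating $H^{2n+1}(\Z_m;\Z_m)$, and the use of naturality of the Kronecker pairing are all in line with the paper's proof (the paper phrases the top-degree input as injectivity of $f^*$ on $H^{2n+1}(\cdot;\Z_m)$ rather than via the evaluation pairing, an inessential difference). However, there is a genuine gap at the final step. From the fact that $\langle \bar u\cup\bar v^{\,n},[X]\rangle$ is a unit of $\Z_m$ you retain only the qualitative conclusion $\bar v\neq 0$, and then invoke the principle that a single nonzero Bockstein class in $H^2(X;\Z_m)$ certifies an element of order $m$ in $\mathrm{Tors}\,H_1(X;\Z)$. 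That principle is false for composite $m$: the integral Bockstein class $\tilde\beta(\bar u)\in H^2(X;\Z)$ is annihilated by $m$, so its order is only some divisor of $m$, and nonvanishing of its mod-$m$ reduction merely guarantees that this divisor is greater than $1$. For instance, with $m=4$ an order-$2$ class $t\in H^2(X;\Z)$ with $t\notin 4H^2(X;\Z)$ has nonzero mod-$4$ reduction, so your exact-sequence argument as written only yields $|\mathrm{Tors}\,H_1(X;\Z)|\geq 2$ (more generally, the smallest prime factor of $m$), not $\geq m$.

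The fix is to keep the quantitative information you already established, and this is exactly how the paper argues. Since the pairing of $\bar u\cup\bar v^{\,n}$ with the integral class $[X]$ is a unit of $\Z_m$, the class $\bar u\cup\bar v^{\,n}$ has order exactly $m$ in $H^{2n+1}(X;\Z_m)$; if $d\,\bar v=0$ for some $d<m$, then $d\,(\bar u\cup\bar v^{\,n})=\bar u\cup(d\,\bar v)\cup\bar v^{\,n-1}=0$, a contradiction, so $\bar v$ has order exactly $m$ in $H^2(X;\Z_m)$. As $\bar v$ is the mod-$m$ reduction of $\tilde\beta(\bar u)$, which is killed by $m$, the integral class $\tilde\beta(\bar u)$ has order exactly $m$ in $\mathrm{Tors}\,H^2(X;\Z)$, and the universal-coefficient duality $\mathrm{Tors}\,H^2(X;\Z)\simeq\mathrm{Tors}\,H_1(X;\Z)$ then gives $|\mathrm{Tors}\,H_1(X;\Z)|\geq m$ for every geometric cycle, hence $t_1({\bf a})\geq m$. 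So the missing ingredient is precisely this order-tracking through the cup product; without it the argument proves only a much weaker bound.
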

\begin{proof}
Let $(X,f)$ be a geometric cycle representing ${\bf a}$. As ${\bf
a}$ is a generator of $H_{2n+1}(\Z_m, \Z) \simeq \Z_m$, the map
$f$ induces an isomorphism
\begin{equation}\label{eq:isom}
f^* : H^{2n+1}(\Z_m, \Z_m) \to H^{2n+1}(X, \Z_m).
\end{equation}
Let
$$
\beta : H^1(\Z_m, \Z_m) \to H^2(\Z_m, \Z) .
$$
denotes the  Bockstein homomorphism and
$$
j: H^2(\Z_m, \Z)\to H^2(\Z_m, \Z_m)
$$
the morphism of reduction modulo $m$. In our case, $j$ is an
isomorphism. A generator of $H^{2n+1}(\Z_m, \Z_m)$ (not
necessarily dual to ${\bf a}$) can be choosen as the element ${\bf u} \cup (j
\circ \beta({\bf u}))^n$ where  ${\bf u} \in H^1(\Z_m, \Z_m)$ is
some generator. Now consider $f^*(\beta({\bf u})) = \beta(f^*({\bf
u})) \in H^2(X, \Z)$. Taking into account the isomorphism
(\ref{eq:isom}) we see that $f^*({\bf u}) \cup \big{(}f^*(j \circ \beta({\bf
u}))\big{)}^n$ is an element of order $m$ in $H^{2n+1}(X, \Z_m)$.
This implies that the order of $f^*(j \circ \beta({\bf u})) \in
H^2(X, \Z_m)$ is $m$, and thus that the order of $\beta(f^*({\bf u})) \in
H^2(X,\Z)$ is also $m$. By the duality (\ref{eq:dual}) we get the
result.
\end{proof}

Remark that the statement of this lemma as well as its proof holds
in the case of a simplicial complex $X$ representing the class
${\bf a}$. In this more general case note that
the map (\ref{eq:isom}) is only injective. \\

Given two integers $n\geq 0$ and $m \geq 2$ let $L_n(m)$ denote a
lens space of dimension $2n+1$ with fundamental group $\Z_m$: there exist integers $q_1, \ldots,q_m$ coprime with $m$ and an
isometry $A$ of order $m$ of the form
$$
A(z_1,\ldots,z_n) = (e^{2\pi i {q_1 \over m}}z_1, \ldots, e^{2\pi
i {q_n \over m}}z_n)
$$
such that
$$
L_n(m):=\{Z=(z_1,\ldots,z_n) \in \C^n \mid |z_1|^2+\ldots+|z_n|^2=1\}/_{\sim A} \, \, \simeq S^{2n+1}/\Z_m,
$$
where  $Z\sim Z'$ if and only if $Z = A^k \,Z'$. Observe that the
fundamental class of a lens space $L_n(m)$ realizes a generator
${\bf a}$ of the homology group $H_{2n+1}(\Z_m, \Z)$.

Combining Lemma \ref{lemma:representation} and Theorem
\ref{th:torsion} we derive the following result.

\begin{theorem}\label{th:lenticulaire}
For any integer $m\geq 2$ we have
$$
\SSys(L_n(m)) \geq C_n\big{(}\ln m\big{)}^{1 -
\frac{C'_n}{\sqrt{\ln (\ln m)}}}
$$
where $C_n$ and $C'_n$ are two positive numbers depending only on the dimension $n$.
\end{theorem}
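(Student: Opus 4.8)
The plan is to deduce the statement by simply feeding the two ingredients already at hand into one another. First I would observe that a lens space $L_n(m)$ is a closed orientable manifold of dimension $2n+1$ with fundamental group $\Z_m$, so the pair $(L_n(m),f)$, where $f\colon L_n(m)\to K(\Z_m,1)$ is the classifying map, is a geometric cycle; as already noted above, its fundamental class is sent by $f$ to a generator ${\bf a}$ of $H_{2n+1}(\Z_m,\Z)\simeq\Z_m$. By the very definition of $\SSys({\bf a})$ as an infimum over all geometric cycles representing ${\bf a}$, and since $\SSys(L_n(m))$ is by convention the systolic volume of the pair $(L_n(m),f)$ for the classifying map, we get
$$
\SSys(L_n(m))=\SSys_f(L_n(m))\geq\SSys({\bf a}).
$$

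Next I would invoke Lemma \ref{lemma:representation}, which gives $t_1({\bf a})\geq m$, followed by Theorem \ref{th:torsion} applied to the class ${\bf a}\in H_{2n+1}(\Z_m,\Z)$ (the dimension being $2n+1$ only affects the names of the constants, which then depend on $n$ alone). This yields
$$
\SSys({\bf a})\geq C_n\big(\ln t_1({\bf a})\big)^{1-\frac{C'_n}{\sqrt{\ln(\ln t_1({\bf a}))}}}.
$$
Combining the two inequalities gives $\SSys(L_n(m))\geq C_n(\ln t_1({\bf a}))^{1-C'_n/\sqrt{\ln\ln t_1({\bf a})}}$, so it only remains to replace $t_1({\bf a})$ by $m$ on the right-hand side.

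The one point needing a little care is exactly this last replacement. For $x$ large the function $x\mapsto(\ln x)^{1-C'_n/\sqrt{\ln\ln x}}$ is increasing: its logarithm equals $\ln\ln x-C'_n\sqrt{\ln\ln x}$, which is manifestly increasing in $x$ once $\ln\ln x$ is large enough. Hence from $t_1({\bf a})\geq m$ one obtains the asserted bound with $m$ in place of $t_1({\bf a})$ for all $m$ beyond some absolute threshold, and for the finitely many remaining small values of $m$ one uses the universal positivity $\SSys_{2n+1}>0$ recalled before Definition \ref{def:sys} and shrinks $C_n$ if necessary. I expect no genuine obstacle here: all the substantive work has already been carried out, in Lemma \ref{lemma:representation} (a Bockstein and cup-product computation forcing an element of order $m$ to survive in $H^2(X,\Z)$) and in Theorem \ref{th:torsion} (Gromov's simplicial-height lower bound for systolic volume together with Proposition \ref{prop:simplexe2}); the present proof is merely the short synthesis of these two facts.
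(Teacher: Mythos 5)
Your proposal is correct and follows essentially the same route as the paper, which obtains Theorem \ref{th:lenticulaire} precisely by combining Lemma \ref{lemma:representation} ($t_1({\bf a})\geq m$ for a generator ${\bf a}$ realized by the fundamental class of $L_n(m)$) with Theorem \ref{th:torsion}. Your additional remark about the monotonicity of $x\mapsto(\ln x)^{1-C'_n/\sqrt{\ln\ln x}}$ and the adjustment of constants for small $m$ is a harmless and correct elaboration of a step the paper leaves implicit.
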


We remark that this lower bound is of
the same type as that for $\sigma(\Z_m)$ in Corollary
\ref{cor:sigma.Z.m}. Now we turn to the proof of the new upper bound for systolic volume of lens spaces.

\begin{theorem}\label{th:lenticulaire2}
For any integer $m\geq 2$ we have
$$
\SSys(L_n(m)) \leq D_n m^n
$$
where $D_n$ is a positive number depending only on the dimension $n$.
\end{theorem}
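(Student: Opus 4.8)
The plan is to beat the round metric (which only gives $\SSys(L_n(m))\le \mathrm{const}\cdot m^{2n}$) by a \emph{Berger-type} deformation. Choose integers $\tilde q_0,\dots,\tilde q_n$ with $\tilde q_k\equiv q_k\pmod m$, and let $\rho$ be the circle action on $S^{2n+1}\subset\C^{n+1}$ given by $\rho_\theta(z_0,\dots,z_n)=(e^{i\tilde q_0\theta}z_0,\dots,e^{i\tilde q_n\theta}z_n)$. Then $\rho_{2\pi/m}=A$, so the deck transformation $A$ of the cover $S^{2n+1}\to L_n(m)$ lies in $\rho(S^1)$ and in particular commutes with it. For a parameter $\mu\ge 1$ to be fixed later, let $g_\mu$ be the metric on $S^{2n+1}$ obtained from the round metric $g_1$ by multiplying lengths by $\mu$ in the direction tangent to the $\rho$-orbits and leaving the ($\rho$-horizontal) orthogonal directions unchanged. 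Since $g_\mu$ is $\rho$-invariant it is $A$-invariant, hence descends to $L_n(m)$, still written $g_\mu$.

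First I would record the volume, which is the easy part: scaling one direction of lengths by $\mu$ multiplies the $(2n+1)$-volume form by $\mu$, so $\vol(S^{2n+1},g_\mu)=\mu\,\vol(S^{2n+1})$, independently of the $\tilde q_k$ and of $\rho$; therefore $\vol(L_n(m),g_\mu)=\mu\,\vol(S^{2n+1})/m$. Consequently
$$
\SSys(L_n(m))\ \le\ \frac{\mu\,\vol(S^{2n+1})}{m\,\sys(L_n(m),g_\mu)^{2n+1}},
$$
and everything reduces to a lower bound for the systole.

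The heart of the matter — and the step I expect to be the main obstacle — is the systole estimate. A shortest noncontractible loop of $(L_n(m),g_\mu)$ lifts to a $g_\mu$-geodesic $\tilde\gamma$ from some $x$ to $A^jx$ with $j\not\equiv 0\pmod m$, so one needs $d_{g_\mu}(x,A^jx)\ge c_n\,m^{-1/2}$ uniformly in $x$ and $j$. Note $g_\mu\ge g_1$, so $d_{g_\mu}(x,A^jx)\ge d_{g_1}(x,A^jx)\ge c/m$ already (this is exactly what produces the round-metric bound $m^{2n}$); the gain must come from the vertical stretching. Splitting the velocity of $\tilde\gamma$ into a $\rho$-vertical and a $\rho$-horizontal part, the horizontal contribution bounds below the length of the projection $\bar\gamma$ of $\tilde\gamma$ to $B:=S^{2n+1}/\rho$ (the horizontal metric of $g_\mu$ is unchanged), while the vertical contribution is amplified by $\mu$. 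One then runs a Berger-sphere-type dichotomy: either $\bar\gamma$ is long, of length $\gtrsim m^{-1/2}$, and we are done; or $\bar\gamma$ is short, and the isoperimetric inequality on $B$ bounds the connection holonomy it can realize, so that — since $A^j$ shifts every $\rho$-orbit by at least $\tfrac1m$ of its period, because $\gcd(q_k,m)=1$ — the stretched vertical part must absorb the remaining fibre-rotation $\gtrsim 1/m$, contributing $\gtrsim\mu/m$ to the length. Balancing $\mu/m$ against $m^{-1/2}$ forces the choice $\mu\asymp m^{1/2}$, for which $\sys(L_n(m),g_\mu)\ge c_n\,m^{-1/2}$; feeding this into the displayed inequality yields $\SSys(L_n(m))\le D_n\,m^{n}$.

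The delicate technical point inside this estimate is that $B=S^{2n+1}/\rho$ is a weighted projective space, hence an orbifold with finitely many singular fibres along the coordinate axes, near which the induced horizontal metric degenerates; I would deal with this either by taking the competing loop so that its lift avoids the singular fibres (which costs only in constants) or by checking that near a singular fibre the orbit is shorter and the estimates only improve, so that the constants $c_n$, $D_n$ depend on $n$ alone. (An alternative, purely combinatorial, route to the same bound: as $\gcd(q_k,m)=1$, an $m$-gon triangulation of the circle is invariant under rotation by $2\pi q_k/m$, so $L_n(m)$ is the quotient by a free simplicial $\Z_m$-action of the join of $n+1$ such $m$-gons, a triangulation of $S^{2n+1}$ with $O_n(m^{n+1})$ simplices; the quotient is a pseudomanifold representing the generator of $H_{2n+1}(\Z_m,\Z)$ with $O_n(m^{n})$ simplices, on which a suitable PL metric has volume $O_n(m^n)$ and a definite lower bound on the relative systole, again giving $\SSys(L_n(m))\le D_n m^n$.)
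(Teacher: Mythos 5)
Your main (Berger-type) route has a genuine gap at precisely the step you single out as the heart of the matter: the claimed bound $\sys(L_n(m),g_\mu)\ge c_n\,m^{-1/2}$ for $\mu\asymp m^{1/2}$ is false for general lens spaces. The difficulty is that the lifted weights $\tilde q_k$ cannot in general be taken bounded independently of $m$ (for $L_1(m)$ with a typical $q$, even after replacing $A$ by another generator, the two weights cannot both be made $o(\sqrt m)$), and the holonomy-per-area constant of the connection on $S^{2n+1}\to B$ grows with the weights, so your dichotomy does not run with constants depending on $n$ alone. Concretely, take $n=1$ and weights $(1,q)$ with $\gcd(q,m)=1$. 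Start at $x=(1,0)$, move radially to $|z_1|=\epsilon$, traverse once the horizontal lift of the circle $|z_1|=\epsilon$ (horizontality with respect to $V=(iz_0,iqz_1)$ forces $\arg z_0$ to drift by $-2\pi q\epsilon^2/(1-\epsilon^2)$), and return radially. Choosing $\epsilon^2$ so that $q\epsilon^2/(1-\epsilon^2)=1/m$, i.e.\ $\epsilon\approx (qm)^{-1/2}$, the endpoint is exactly $A^{-1}x$, so the projected loop is noncontractible in $L_1(m)$; the whole path is horizontal, so its $g_\mu$-length equals its round length, about $(2+2\pi)\epsilon\le C(qm)^{-1/2}$, \emph{independently of $\mu$}. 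For $q$ of order $m$ this is of order $1/m$: the vertical stretching buys nothing, the systole stays far below $m^{-1/2}$, and the resulting ratio is no better (indeed worse, because of the factor $\mu$ in the volume) than the round metric's $m^{2n}$. Your proposed remedies (avoiding the singular fibres, or arguing that estimates improve near them) do not touch this phenomenon: the short loops sit near a coordinate circle where the \emph{other} large weight makes the curvature of the connection of order $q$, so ``holonomy $\le C_n\cdot$ area'' fails. The Berger argument does give $m^n$ when all weights can be taken bounded (e.g.\ all $q_k\equiv\pm1$), but not for arbitrary $L_n(m)$ as stated.

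Your parenthetical combinatorial alternative, on the other hand, is essentially sound and is in fact close to the paper's actual proof. The paper subdivides the standard CW structure of $L_n(m)$ (one cell in each dimension, attaching maps of degree $m$) into roughly $2^{n+1}(2(n+1))!\,2(n+1)\,m^n$ unit Euclidean cubes and invokes \cite[Lemma 5.6]{BB10} to get $\sys\ge 2$ for the resulting polyhedral metric, whence $\SSys(L_n(m))\le D_n m^n$. Your join of $n+1$ copies of the $m$-gon is a $\Z_m$-equivariant triangulation of $S^{2n+1}$ with $m^{n+1}$ top simplices on which the action is free on simplices, and the quotient is $L_n(m)$ itself with $m^n$ top cells (strictly speaking a $\Delta$-complex; pass to a barycentric subdivision or to cubes, at the cost of a constant $C_n$, if you want an honest simplicial or cubical structure). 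You then still need the analogue of the cited lemma, namely that for the unit-simplex (or unit-cube) polyhedral metric any loop of length below a dimensional constant lies in the star of a vertex and is therefore contractible. With that ingredient made explicit, this route gives exactly the stated bound; you should promote it from a parenthesis to the proof and drop, or restrict to special weights, the differential-geometric argument.
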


As observed in the introduction, this polynomial lower bound is better than the one obtained by computing the systolic volume for the round metric (which is roughly $ \approx m^{2n}$).

\begin{proof}
We first decompose the manifold $L_n(m)$ into $(2n+1)$-dimensional cubes, and then use this decomposition to construct a metric for which we control the systolic volume in terms of the number of these cubes, compare with \cite{BB05} and \cite{BB15}.\\

We decompose $L_n(m)$ into $(2n+1)$-cubes as follows. Start with the standard cellular decomposition of $L_n(m)$ denoted by $\Theta$  (see  \cite[p.145]{Hat02} generalizing for the case  $n>1$ the construction of \cite{ST80}). This decomposition $\Theta$ has exactly one $k$-cell denoted by $e^k$ in each dimension $k=0, 1,\ldots , 2n+1$. Denote by $q_k$ its center. We subdivide the cellular decomposition $\Theta$ in such a way that each new cell admits a simplicial structure. We proceed by induction as follows.
For $k=0$ there is nothing to do but observe that $e^0=\{q_0\}$. Because $e^1$ is attached to $e^0$ we subdivide $e^1$ into the two arcs denoted by $e^1_0$ and $e^1_1$ connecting $q_0$ and $q_1$. Observe at this stage that the complex thus obtained is not a simplicial complex, as two simplices may share more than one face in common. The next step consists to first remark that $e^2$ is attached to $e^1\cup e^0$ through a linear map $\varphi_2: \partial e^2 \to e^1 \cup e^0$ of degree $m$, and then take in $e^2$ the cone over the preimages by $\varphi_2$ of $e^1_0$ and $e^1_1$ with respect to the vertex $q_2$. Doing so we subdivise $e^2$ into $2m$ new $2$-cells with a natural structure of a simplex.
We then proceed in that way by induction on the dimension following the structure of $\Theta_1$. More precisely at each step $k\geq 3$ we form the cone over the preimages by the attaching map of the $k-1$-dimensional new cells with respect to the center of the $k$-cell. The attaching maps
$$
\varphi_k: \partial e^k \longrightarrow \mathop{\bigcup}\limits_{s=0}^{k-1} e^s
$$
having degree $m$ if $k$ is even, and zero if $k$ is odd, this gives a new decomposition $\Theta_1$ with
$2^{n+1}m^n$ simplices of dimension $2n+1$. Remark that despite the fact that $\Theta_1$ is not a simplicial decomposition this structure is coherent in the sense that any face of a simplex is a simplex of lower dimension.

Denote by $\Theta_2$ the barycentric subdivision of $\Theta_1$. The structure $\Theta_2$ is now simplicial with $2^{n+1}\cdot (2(n+1))!\cdot m^n$ simplices of dimension $2n+1$.
We decompose each ($2n+1$)-simplex of $\Theta_2$ into $2(n+1)$ cubes of dimension $2n+1$. This gives  a decomposition $\Theta_3$ of $L_n(m)$ into $2^{n+1}\cdot (2(n+1))! \cdot 2(n+1)\cdot m^n$ cubes of
dimension $2n+1$.
Endow  each cube of $\Theta_3$ with the Euclidean metric with side length $1$. We thus get a polyhedral metric $g$ on $L_n(m)$. This metric satisfies  $\sys(L_n(m), g) \geq 2$ according to \cite[Lemma 5.6]{BB15}. Because $\vol(L_n(m), g) =2^{n+1}[2(n+1)]![2(n+1)] m^n$, this gives the result with $D_n =
{[2(n+1)]!(n+1) \over 2^{n-1}}$.
\end{proof}

\subsection{Application to $3$-manifolds}

Let $M$ be a closed manifold. For a covering space $M'$ with $k$
sheets of $M$, it is straightforward to check that
\begin{equation}\label{eq:revetement}
\SSys(M) \geq {1 \over k} \SSys(M') .
\end{equation}
Let $M$ be a manifold of dimension $3$ with finite fundamental
group. Its universal cover is the sphere $S^3$, and the action of
the fundamental group $\pi_1(M)$ on $S^3$ is orthogonal. The list
of finite groups which act orthogonally on $S^3$ can be found in
\cite{Miln57} for instance. An analysis of this list shows that
$\pi_1(M)$ possesses a cyclic subgroup of index $k \leq 12$.
Denote by $M'$ the covering space corresponding to this subgroup.
The manifold  $M'$ is a lens space  $L_1(n)$ with $n \geq
{|\pi_1(M)| \over 12}$. So by applying Theorem
\ref{th:lenticulaire} and the inequality (\ref{eq:revetement})
with $k = 12$ we derive the last theorem of this paper.

\begin{theorem}\label{th:varietes3dim}
There exists two positive constants  $a$ and $b$ such that any
manifold  $M$ of dimension $3$ with finite fundamental group
satisfies
$$
\SSys(M) \geq a\big{(}\ln |\pi_1(M)|\big{)}^{1 -
\frac{b}{\sqrt{\ln (\ln |\pi_1(M)|)}}} ,
$$
where $|\pi_1(M)|$ denotes the number of elements in $\pi_1(M)$.
\end{theorem}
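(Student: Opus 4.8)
The plan is to combine the covering-space inequality \eqref{eq:revetement} with the structure of finite subgroups of $SO(4)$ acting freely on $S^3$, reducing the general case to the lens space case already handled in Theorem \ref{th:lenticulaire}. First I would recall the classification (from \cite{Miln57}) of finite groups acting orthogonally and freely on $S^3$: each such group $\pi_1(M)$ contains a cyclic normal subgroup of index bounded by a universal constant; inspection of the list gives index $k\leq 12$. Let $H\leq \pi_1(M)$ be such a cyclic subgroup and $M'\to M$ the associated covering space, so that $M'$ has $k\leq 12$ sheets, is itself a spherical space form with $\pi_1(M')=H$ cyclic, hence a lens space $L_1(n)$ with $n=|H|=|\pi_1(M)|/k \geq |\pi_1(M)|/12$.

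Next I would apply \eqref{eq:revetement} with this $k$, giving
$$
\SSys(M) \geq \frac{1}{k}\,\SSys(M') = \frac{1}{k}\,\SSys(L_1(n)) \geq \frac{1}{12}\,\SSys(L_1(n)).
$$
Then I would invoke Theorem \ref{th:lenticulaire} in dimension $n=1$: there are positive constants $C_1, C_1'$ with
$$
\SSys(L_1(n)) \geq C_1 (\ln n)^{1-\frac{C_1'}{\sqrt{\ln\ln n}}}.
$$
Substituting $n\geq |\pi_1(M)|/12$ and using that $x\mapsto (\ln x)^{1-c/\sqrt{\ln\ln x}}$ is, up to adjusting the constants, insensitive to replacing $x$ by $x/12$ (since $\ln(|\pi_1(M)|/12) = \ln|\pi_1(M)| - \ln 12$ and the exponent varies slowly), one obtains constants $a,b>0$ such that
$$
\SSys(M) \geq a\big(\ln|\pi_1(M)|\big)^{1-\frac{b}{\sqrt{\ln\ln|\pi_1(M)|}}},
$$
which is the claim. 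One should note the bound is only meaningful once $|\pi_1(M)|$ is large enough that $\ln\ln|\pi_1(M)|$ is defined and positive; for the finitely many small groups the inequality holds trivially after shrinking $a$, since $\SSys$ of a fixed manifold is a fixed positive number.

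The routine part is the elementary asymptotic bookkeeping needed to absorb the factor $12$ and the shift $\ln 12$ into the constants $a$ and $b$ while preserving the form of the exponent; this is a short computation comparing $(\ln x)^{1-b/\sqrt{\ln\ln x}}$ with $(\ln(x/12))^{1-C_1'/\sqrt{\ln\ln(x/12)}}$ for large $x$. The only genuinely external input is the classification of free orthogonal actions on $S^3$ and the resulting uniform bound on the index of a cyclic subgroup; I would expect this to be the main point one must be careful about, since it is what makes the constant $12$ (and hence the universality of $a,b$) legitimate rather than depending on $M$. Everything else follows formally from results established earlier in the paper.
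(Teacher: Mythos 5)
Your proposal is correct and follows essentially the same route as the paper: the covering inequality (\ref{eq:revetement}), Milnor's classification giving a cyclic subgroup of index $k\leq 12$, identification of the corresponding cover with a lens space $L_1(n)$ with $n\geq |\pi_1(M)|/12$, and Theorem \ref{th:lenticulaire} in dimension $3$. The only difference is that you spell out the bookkeeping needed to absorb the factor $12$ into the constants $a$ and $b$, which the paper leaves implicit.
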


Remark that finite fundamental groups of  $3$-manifolds can have a
large number of elements but a very small torsion in
$H_1(\pi_1(M), \Z)$. A direct estimate of $\SSys(M)$ by the
torsion of $H_1(\pi_1(M), \Z)$ is interesting only if the manifold
$M$ is a lens space.\\

\noindent {\bf Acknowledgements.} The authors are grateful to the referees whose valuable comments lead us to considerably improve the presentation of this article, and also to one of the referees who pointed out a mistake in the initial proof of Theorem \ref{th:syscomplexity}.

\end{document}